\newcommand\R{\mathbb{R}}
\newcommand\N{\mathbb{N}}
\newcommand\C{\mathbb{C}}
\newcommand\No{\mathbb{N}_0}
\newcommand\Rp{\mathbb{R}^{+}}
\newcommand\re{\mathrm{e}}
\newcommand\abs[1]{{\lvert#1\rvert}}
\newcommand\norm[1]{{\lVert #1\rVert}}
\newcommand{\dm}{\mathop{}\!\mathrm{d}}
\newcommand\ob[3][\relax]{\mathrm{B}%
\ifx\relax#1\relax\else%
_{#1}\fi(#2,#3)}
\newcommand\cb[3][\relax]{\bar{\mathrm{B}}%
\ifx\relax#1\relax\else%
_{#1}\fi(#2,#3)}
\newcommand\emptynorm{\lVert\,\cdot\,\rVert}
\newcommand\emptyabs{\lvert\,\cdot\,\rvert}
\newcommand\adj{^*} 
\newcommand\pin{^-} 
\newcommand\pfi{^+} 
\newcommand{\pl}[1]{\Lambda(#1)} 
\newcommand\F{\mathcal{F}}
\newcommand\od[2]{D^{\ifx\relax#1\relax\else(#1)\fi}(#2)} 
\newcommand\fd[3][{\F}]{D_{#1}^{\ifx\relax#2\relax\else(#2)\fi}(#3)} 
\newcommand\plp{^{\mathrm{pl}}}
\newcommand\pln{^{\mathrm{no}}}
\newcommand\diff[1]{^{(#1)}} 
\newcommand\fdiff[1]{^{[#1]}} 
\newcommand\cm{^{\mathrm{c}}}
\newcommand\clstar[2]{\ast^{#1}_{#2}}
\newcommand\refcl[2]{($\clstar{#1}{#2}$)}
\DeclareMathOperator\supp{supp}
\DeclareMathOperator\tint{int}
\newtheorem{theorem}{Theorem}[section]
\newtheorem{proposition}[theorem]{Proposition}
\newtheorem{corollary}[theorem]{Corollary}
\newtheorem{lemma}[theorem]{Lemma}
\theoremstyle{definition}
\newtheorem{definition}[theorem]{Definition}
\newtheorem{question}[theorem]{Question}
\theoremstyle{remark}
\newcommand\theaddress{}
\newcommand\address[1]{\renewcommand\theaddress{#1}}
\newcommand\theemails{}
\newcommand\email[1]{%
  \ifx\relax\theemails\relax
    \renewcommand\theemails{Email address: {\ttfamily#1}}
  \else
    \g@addto@macro\theemails{\\Email address: {\ttfamily#1}}
  \fi}
\newcommand\keywords[1]{%
  \let\qbfatempa\thefootnote
  \renewcommand\thefootnote{}
  \footnote{\textit{Key words and phrases}. #1}
  \let\thefootnote\qbfatempa
}
\newcommand\subjclass[2][1990]{%
  \let\qbfatempa\thefootnote
  \renewcommand\thefootnote{}
  \footnote{#1 \textit{Mathematics Subject Classification}. #2}
  \let\thefootnote\qbfatempa
}
\title{Quasianalyticity in certain Banach function algebras}
\author{J. F. Feinstein \and S. Morley\thanks{This paper contains work from the second author's PhD thesis.}\hphantom{i}\thanks{The second author was supported by EPSRC grant number {EP/L50502X/1}}}
\address{School of Mathematical Sciences, The University of Nottingham, University Park, Nottingham, NG7 2RD, UK}
\email{joel.feinstein@nottingham.ac.uk}
\email{pmxsm9@nottingham.ac.uk}
\date{}
\begin{document}
\maketitle

\keywords{Differentiable functions, Banach function algebra, Uniform algebra, Quasianalyticity, Jensen measures, Swiss cheeses.}
\subjclass[2010]{Primary 46J10, 46J15; Secondary 46E25.}

\begin{abstract}
Let $X$ be a perfect, compact subset of the complex plane. We consider algebras of those functions on $X$ which satisfy a generalised notion of differentiability, which we call $\mathcal{F}$-differentiability. In particular, we investigate a notion of quasianalyticity under this new notion of differentiability and provide some sufficient conditions for certain algebras to be quasianalytic. We give an application of our results in which we construct an essential, natural uniform algebra $A$ on a locally connected, compact Hausdorff space $X$ such that $A$ admits no non-trivial Jensen measures yet is not regular. This construction improves an example of the first author (2001).
\end{abstract}

Let $X$ be a perfect, compact subset of the complex plane $\C$. We consider those normed algebras consisting of complex-valued, continuously complex-differentiable functions on $X$, denoted $\od 1X$. These algebras were introduced by Dales and Davie in \cite{dales1973} and further investigated, for example, in \cite{bland2005} and \cite{dalefein2010}. The algebra $\od1X$ need not be complete, and the completion of $\od1X$ need not be a Banach function algebra in general.

Bland and the first author \cite{bland2005} introduced $\F$-differentiation, which generalises the usual complex-differentiation, and considered normed algebras of $\F$-differentiable functions, denoted $\fd1X$. The algebra $\fd1X$ is complete and $\od1X\subseteq\fd1X$.

Dales and Davie (\cite{dales1973}) also considered those algebras of complex-valued functions which have continuous complex-derivatives of all orders, and introduced the Dales-Davie algebras $\od{}{X,M}$. They defined a notion of quasianalyticity for these algebras and gave sufficient conditions for the algebra $\od{}{X,M}$ to be quasianalytic. (For the classical definition of quasianalytic collections of functions, see \cite[Chapter~19]{rudin1987}.)

In this paper, we define a notion of quasianalyticity for infinitely $\F$-differentiable functions (defined later) and give a sufficient condition for this new notion of $\F$-quasianalyticity. In certain cases, this sufficient condition will improve that given by Dales and Davie in \cite{dales1973}. We conclude the paper with the construction of a uniform algebra $A$ on a locally connected, compact Hausdorff space $X$ such that $A$ is essential and $A$ does not admit any non-trivial Jensen measures yet is not regular. (The relevant definitions are given in Section \ref{construction section}.) This construction improves an example of the first author from \cite{feinstein2001}.

\section{Definitions and Basic results}
Throughout this paper we say {\em compact plane set} to mean a non-empty, compact subset of the complex plane $\C$. We denote the set of non-negative integers by $\No$ and the set of positive integers by $\N$. Let $X$ be a compact Hausdorff space. We denote the algebra (with pointwise operations) of all continuous, complex-valued functions on $X$ by $C(X)$. For $E\subseteq X$, we set
\[
\abs{f}_E:=\sup_{x\in E}\abs{f(x)}\qquad(f\in C(X)).
\]
With the norm $\emptyabs_X$, $C(X)$ is a commutative, unital Banach algebra. Let $S$ be a subset of $C(X)$. We say that $S$ {\em separates the points of $X$} if, for each $x,y\in X$ with $x\neq y$, there exists $f\in S$ such that $f(x)\neq f(y)$.

\begin{definition}
Let $X$ be a compact Hausdorff space. A {\em normed function algebra} on $X$ a normed algebra $(A,\emptynorm)$ such that $A$ is a subalgebra of $C(X)$, $A$ contains all constant functions and separates the points of $X$, and, for each $f\in A$, $\norm{f}\geq\abs{f}_X$. A {\em Banach function algebra} on $X$ is a normed function algebra on $X$ which is complete. A {\em uniform algebra} is a Banach function algebra such that $\emptynorm=\emptyabs_X$.
\end{definition}

Let $X$ be a compact Hausdorff space and let $A$ be a Banach function algebra on $X$. We say that $A$ is {\em natural on $X$} if every character on $A$ is given by evaluation at some point of $X$.

We refer the reader to \cite[Chapter~4]{dales2000} for further information on Banach function algebras and uniform algebras.

We are particularly interested in Banach function algebras consisting of continuous functions on a compact plane set which satisfy some notion of differentiability.

\begin{definition}
Let $X$ be a perfect compact plane set and let $f:X\to \C$ be a function. We say that $f$ is {\em complex differentiable at $x\in X$} if the limit
\[
f'(x):=\lim_{\substack{z\to x\\z\in X}}\frac{f(z)-f(x)}{z-x}
\]
exists. We say that $f$ is {\em complex differentiable on $X$} if $f$ is complex differentiable at each point $x\in X$ and we call the function $f':X\to\C$ the {\em derivative} of $f$. We say that $f$ is {\em continuously complex differentiable} if $f'$ is continuous.
\end{definition}

In the remainder of this paper, we shall say {\em differentiable} and {\em continuously differentiable} to mean complex differentiable and continuously complex differentiable, respectively. We refer the reader to \cite{conway1978}, for example, for results from complex analysis.

Let $X$ be a perfect compact plane set. We denote the algebra of all continuously differentiable functions on $X$ by $\od1X$. For each $n\in\N$, let $\od{n}{X}$ denote the algebra of all $n$-times continuously differentiable functions on $X$ (defined inductively). Let $\od{\infty}{X}:=\bigcap_{n=1}^\infty\od{n}{X}$. Let $n\in\N$ and let $f\in \od{n}{X}$. We denote the $n$th derivative of $f$ by $f\diff n$, and we will often write $f\diff 0$ for $f$.

\begin{definition}
Let $M=(M_n)_{n=0}^\infty$ be a sequence of positive real numbers. We say that $M$ is an {\em algebra sequence} if $M_0=1$ and$,$ for each $j,k\in\No,$ we have
\[
\binom{j+k}{k}\leq\frac{M_{j+k}}{M_jM_k}.
\]
We say that $M$ is {\em log-convex} if, for each $k\in\N,$ we have $M_k^2\leq M_{k-1}M_{k+1}$.
\end{definition}

We conclude this section with a discussion of paths in $\C$. For the remainder of this section, let $a,b\in\R$ with $a<b$.

\begin{definition}
A {\em path} in $\C$ is a continuous function $\gamma:[a,b]\to\C$. Let $\gamma:[a,b]\to\C$ be a path. The {\em parameter interval of $\gamma$} is the interval $[a,b]$. The {\em endpoints of $\gamma$} are the points $\gamma(a)$ and $\gamma(b),$ which we denote by $\gamma\pin$ and $\gamma\pfi,$ respectively. We denote by $\gamma\adj$ the {\em image} $\gamma([a,b])$ of $\gamma$. A {\em subpath} of $\gamma$ is a path obtained by restricting $\gamma$ to a non-degenerate$,$ closed subinterval of $[a,b]$. If $X$ is a subset of $\C$ then we say that $\gamma$ is a {\em path in $X$} if $\gamma\adj\subseteq X$.
\end{definition}

Let $\gamma:[a,b]\to\C$ be a path in $\C$. We say that $\gamma$ is a {\em Jordan path} if $\gamma$ is an injective function. We denote the {\em length} of $\gamma$, as defined in \cite[Chapter~6]{apostol1974}, by $\pl\gamma$, and we say that $\gamma$ is {\em rectifiable} if $\pl\gamma<\infty$ and $\gamma$ is {\em non-rectifiable} otherwise. We say that $\gamma$ is {\em closed} if $\gamma\pfi=\gamma\pin$, and we say that $\gamma$ is a closed Jordan path if $\gamma$ is closed and $\gamma(s)=\gamma(t)$, where $s,t\in[a,b]$, implies that either $s=t$ or $s=a$ and $t=b$. We say that $\gamma$ is {\em admissible} if $\gamma$ is rectifiable and has no constant subpaths. The {\em reverse} of $\gamma$ is the path $-\gamma:[-b,-a]\to\gamma\adj$ given by $-\gamma(t)=\gamma(-t)$. It is standard that
\[
\int_{-\gamma}f(z)\dm z=-\int_\gamma f(z)\dm z\qquad(f\in C(\gamma\adj)).
\]
Now suppose that $\gamma$ is non-constant and rectifiable. We define the {\em path length parametrisation} $\gamma\plp:[0,\pl\gamma]\to\C$ of $\gamma$ to be the unique path satisfying $\gamma\plp(\pl{\gamma|[a,t]})=\gamma(t)$ ($t\in [a,b]$); see, for example, \cite[pp.~109-110]{federer1969} for details. We define the {\em normalised path length parametrisation} $\gamma\pln:[0,1]\to\C$ of $\gamma$ to be the path such that $\gamma\pln(t)=\gamma\plp(t\pl\gamma)$ for each $t\in[0,1]$. It is clear that $\gamma\plp$ and $\gamma\pln$ are necessarily admissible paths and $(\gamma\plp)\adj=(\gamma\pln)\adj=\gamma\adj$. It is not hard to show, using \cite[Theorem~2.4.18]{federer1969}, that
\[
\int_\gamma f(z)\dm z=\int_{\gamma\plp}f(z)\dm z=\int_{\gamma\pln} f(z)\dm z,
\]
for all $f\in C(\gamma\adj)$. We shall use this fact implicitly throughout.

\begin{definition}
Let $X$ be a perfect compact plane set and let $\F$ be a collection of paths in $X$. Define $\F\adj:=\{\gamma\adj:\gamma\in\F\}$. We say that $\F$ is {\em effective} if $\bigcup\F\adj$ is dense in $X,$ each path $\gamma\in\F$ is admissible$,$ and each subpath of a path in $\F$ belongs to $\F$.
\end{definition}

Let $X$ be a compact plane set. We say that $X$ is {\em semi-rectifiable} if the set of all Jordan paths in $X$ is an effective collection of paths in $X$. We say that $X$ is {\em rectifiably connected} if, for each $x,y\in X$, there exists a rectifiable path $\gamma\in\F$ such that $\gamma\pin=x$ and $\gamma\pfi=y$. We say that $X$ is {\em uniformly regular} if there exists a constant $C>0$ such that for each $x,y\in X$ there exists a rectifiable path $\gamma$ in $X$ with $\gamma\pin=x$ and $\gamma\pfi=y$ such that $\pl\gamma\leq C\abs{x-y}$. We say that $X$ is {\em pointwise regular} if for each $x\in X$ there exists a constant $C_x>0$ such that$,$ for each $y\in X,$ there exists a path $\gamma$ in $X$ with $\gamma\pin=x$ and $\gamma\pfi=y$ such that $\pl\gamma\leq C_x\abs{x-y}$. Note that each of the above conditions on $X$ imply that $X$ is perfect.

\section{$\F$-derivatives}
\label{Fderivs}
In this section we discuss algebras of $\F$-differentiable functions as investigated in \cite{bland2005} and \cite{dalefein2010}.

\begin{definition}
Let $X$ be a perfect compact plane set$,$ let $\F$ be a collection of rectifiable paths in $X,$ and let $f\in C(X)$. A function $g\in C(X)$ is an {\em $\F$-derivative for $f$} if$,$ for each $\gamma\in\F,$ we have
\[
\int_{\gamma}g(z)\dm z=f(\gamma\pfi)-f(\gamma\pin).
\]
If $f$ has an $\F$-derivative on $X$ then we say that $f$ is $\F$-differentiable on $X$.
\end{definition}

The following proposition is a list of the elementary properties of $\F$-differentiable functions. Details can be found in \cite{bland2005} and \cite{dalefein2010}.

\begin{proposition}\label{F derive old properties}
Let $X$ be a semi-rectifiable compact plane set and let $\F$ be an effective collection of paths in $X$.
\begin{enumerate}
  \item Let $f,g,h\in C(X)$ be such that $g$ and $h$ are $\F$-derivatives for $f$. Then $g=h$.
  \item Let $f\in\od1X$. Then the usual complex derivative of $f$ on $X,$ $f',$ is an $\F$-derivative for $f$.
  \item Let $f_1,f_2,g_1,g_2\in C(X)$ be such that $g_1$ is an $\F$-derivative for $f_1$ and $g_2$ is an $\F$-derivative for $f_2$. Then $f_1g_2+g_1f_2$ is an $\F$-derivative for $f_1f_2$.
  \item Let $f_1,f_2,g_1,g_2\in C(X)$ and $\alpha,\beta\in\C$ be such that $g_1$ is an $\F$-derivative for $f_1$ and $g_2$ is an $\F$-derivative for $f_2$. Then $\alpha g_1+\beta g_2$ is an $\F$-derivative for $\alpha f_1+\beta f_2$.
\end{enumerate}
\end{proposition}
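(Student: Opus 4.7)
The unifying tool for all four parts is the arc-length reparametrisation $\gamma\plp:[0,\pl\gamma]\to\C$ of an admissible path $\gamma\in\F$. Because $\gamma\plp$ is parametrised by arc length it is $1$-Lipschitz, hence absolutely continuous, with $\abs{(\gamma\plp)'}=1$ almost everywhere; for any $\phi\in C(X)$ one then has
\[
\int_\gamma\phi(z)\dm z=\int_0^{\pl\gamma}\phi(\gamma\plp(u))(\gamma\plp)'(u)\dm u,
\]
and effectiveness of $\F$ ensures that every subpath of $\gamma$ is again a member of $\F$.

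For (a), I would set $\phi=g-h\in C(X)$ and subtract the defining relations of $g$ and $h$ to obtain $\int_\sigma\phi(z)\dm z=0$ for every $\sigma\in\F$. Applied to each subpath of a fixed $\gamma\in\F$ and translated to the arc-length parameter, this gives $\int_{s_1}^{s_2}\phi(\gamma\plp(u))(\gamma\plp)'(u)\dm u=0$ for all $0\le s_1<s_2\le\pl\gamma$. The integrand therefore vanishes almost everywhere, and since $\abs{(\gamma\plp)'}=1$ a.e.\ one concludes that $\phi\circ\gamma\plp=0$ a.e., hence identically on $\gamma\adj$ by continuity. Density of $\bigcup\F\adj$ in $X$ then forces $\phi\equiv 0$.

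Part (b) is the fundamental theorem of calculus along a rectifiable arc. The central step is to show that $f\circ\gamma\plp$ is absolutely continuous on $[0,\pl\gamma]$ with derivative $f'(\gamma\plp(u))(\gamma\plp)'(u)$ almost everywhere. I would take a partition $0=u_0<\cdots<u_n=\pl\gamma$ and estimate each increment via
\[
\abs{f(\gamma\plp(u_{k+1}))-f(\gamma\plp(u_k))-f'(\gamma\plp(u_k))(\gamma\plp(u_{k+1})-\gamma\plp(u_k))}\le\varepsilon\abs{\gamma\plp(u_{k+1})-\gamma\plp(u_k)},
\]
using the uniform continuity of $f'$ on the compact set $\gamma\adj$ together with the $1$-Lipschitz bound on $\gamma\plp$ and the inequality $\sum_k\abs{\gamma\plp(u_{k+1})-\gamma\plp(u_k)}\le\pl\gamma$. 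Integrating over $[0,\pl\gamma]$ then delivers $\int_\gamma f'(z)\dm z=f(\gamma\pfi)-f(\gamma\pin)$. I expect this absolute-continuity estimate to be the only real technical ingredient in the proposition; the set-up is the one used in \cite{bland2005}.

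Parts (c) and (d) are formal consequences. Linearity of the path integral gives (d) at once. For (c), the $\F$-derivative relation for each $g_i$, applied to every subpath of $\gamma$ and transported to the arc-length parameter, realises $f_i\circ\gamma\plp$ as the indefinite integral of the bounded measurable function $g_i(\gamma\plp(u))(\gamma\plp)'(u)$; hence $f_i\circ\gamma\plp$ is absolutely continuous with that function as its derivative almost everywhere. The Leibniz rule for absolutely continuous functions then yields
\[
((f_1f_2)\circ\gamma\plp)'(u)=(f_1g_2+g_1f_2)(\gamma\plp(u))(\gamma\plp)'(u)\quad\text{a.e.,}
\]
and integrating on $[0,\pl\gamma]$ gives the required identity $\int_\gamma(f_1g_2+g_1f_2)(z)\dm z=(f_1f_2)(\gamma\pfi)-(f_1f_2)(\gamma\pin)$.
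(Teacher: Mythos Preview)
The paper does not prove this proposition; it simply states that ``Details can be found in \cite{bland2005} and \cite{dalefein2010}.'' Your approach via the arc-length parametrisation, absolute continuity, and the Lebesgue/Leibniz calculus on $[0,\pl\gamma]$ is correct and is precisely the method used in those references, so there is no meaningful divergence to report.

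One remark on part~(b): the increment estimate
\[
\bigl|f(\gamma\plp(u_{k+1}))-f(\gamma\plp(u_k))-f'(\gamma\plp(u_k))\bigl(\gamma\plp(u_{k+1})-\gamma\plp(u_k)\bigr)\bigr|\le\varepsilon\,\bigl|\gamma\plp(u_{k+1})-\gamma\plp(u_k)\bigr|
\]
does not follow merely from uniform continuity of $f'$ on $\gamma\adj$; uniform continuity of $f'$ controls $|f'(z)-f'(w)|$, not the two-point difference quotient. What one actually needs is the pointwise differentiability at each $\gamma\plp(u)$ together with a Cousin-type (gauge) covering argument on $[0,\pl\gamma]$: for each $u$ choose $\delta_u$ from the definition of the derivative at $\gamma\plp(u)$, take a tagged partition subordinate to this gauge, and use the tag as the base point in the linearisation. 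Summing gives a bound of order $\varepsilon\,\pl\gamma$, and comparison with a Riemann--Stieltjes sum for $\int_\gamma f'(z)\,\dm z$ finishes the job. You have already flagged this step as the one genuine technical point and pointed to \cite{bland2005}, so this is only a sharpening of what you wrote rather than a gap. Parts~(a), (c), and (d) are handled exactly as you describe.
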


Let $X$ be a semi-rectifiable compact plane set, and let $\F$ be an effective collection of paths in $X$. By (a) of the above proposition, $\F$-derivatives are unique. So, in this setting, we write $f\fdiff 1$ for the unique $\F$-derivative of an $\F$-differentiable function. This will be the case considered throughout the remainder of this paper. We will often write $f\fdiff 0$ for $f$. We write $\fd1X$ for the algebra of all $\F$-differentiable functions on $X$. We note that, with the norm $\norm{f}_{\F,1}:=\abs{f}_X+\abs{f\fdiff 1}_{X}$ ($f\in\fd1X$), the algebra $\fd1X$ is a Banach function algebra on $X$ (\cite[Theorem~5.6]{dalefein2010}).

For each $n\in\N$, we define (inductively) the algebra
\[
\fd nX:=\{f\in\fd{1}X:f\fdiff 1\in\fd{n-1}X\},
\]
and, for each $f\in\fd nX,$ we write $f\fdiff n$ for the $n$th $\F$-derivative for $f$. Note that, for each $n\in\N$, $\fd nX$ is a Banach function algebra on $X$ (see \cite{bland2005}) when given the norm
\[
\norm{f}_{\F,n}:=\sum_{k=0}^n\frac{\abs{f\fdiff k}_X}{k!}\qquad(f\in\fd nX).
\]
In addition, we define the algebra $\fd\infty X$ of all functions which have $\F$-derivatives of all orders; that is, $\fd\infty X=\bigcap_{n=1}^\infty\fd nX$. It is easy to see that, for each $n\in\N$, we have $\od{n}{X}\subseteq\fd nX$ and $\od{\infty}{X}\subseteq\fd\infty X$.

We now describe a class of algebras of infinitely $\F$-differentiable functions analogous to Dales-Davie algebras as introduced in \cite{bland2005} (see also \cite{chaobankoh2012}).

\begin{definition}
Let $X$ be a semi-rectifiable$,$ compact plane set and let $\F$ be an effective collection of paths in $X$. Let $M=(M_n)_{n=0}^\infty$ be an algebra sequence. We define the normed algebra
\[
\fd{}{X,M}:=\left\{f\in\fd{\infty}{X}:\sum_{j=0}^\infty\frac{\abs{f\fdiff j}_X}{M_j}<\infty\right\}
\]
with pointwise operations and the norm
\[
\norm{f}:=\sum_{j=0}^\infty\frac{\abs{f\fdiff j}_X}{M_j}\qquad(f\in\fd{}{X,M}).
\]
\end{definition}

\section{$\F$-quasianalyticity}
In this section, we discuss an $\F$-differentiability version of quasianalyticity, and give a sufficient condition for a subalgebra of $\fd{\infty}X$ new notion of quasianalyticity.

We now introduce the following notion of $\F$-quasianalyticity.

\begin{definition}\label{Fquasidef}
Let $X$ be a semi-rectifiable compact plane set and let $\F$ be an effective collection of paths in $X$. Let $A$ be a subalgebra of $\fd{\infty}{X}$. Then $A$ is {\em $\F$-quasianalytic} if, for each $\gamma\in\F$ and $z_0\in\gamma\adj$, the conditions
\[
f\in A\quad\text{and}\quad f\fdiff k(z_0)=0\quad(k=0,1,\dotsc),
\]
together imply that $f(z)=0$ for all $z\in\gamma\adj$.
\end{definition}

Let $X$ be a semi-rectifiable compact plane set and let $\F$ be an effective collection of paths in $X$.

We now aim to give some sufficient conditions for $\F$-quasianalyticity for the algebras $\fd{}{X,M}$. Our method will follow the proof in \cite{cohen1968} of the traditional Denjoy-Carleman theorem.

For the remainder of the section we fix an admissible path $\Gamma$. We also fix $\F$ to be the collection of all subpaths and reverses of subpaths of $\Gamma$. Let $M=(M_n)_{n=0}^\infty$ be a sequence of positive real numbers satisfying
\begin{equation}\label{quasisufficientsum}
\sum_{n=1}^\infty M_{n}^{-1/n}=+\infty.
\end{equation}
We write $\dm s$ for integrals with respect to the path length measure. Set $D:=\fd{\infty}{\Gamma\adj}$ and, for each $n\in\N$, set $D_n:=\fd{n}{\Gamma\adj}$.

We will require the following lemmas. The first lemma is a summary of the properties of the {\em log-convex minorant} of a sequence of positive real numbers. We refer the reader to \cite[Chapter~IV]{koosis1998} and \cite[Chapter~1]{mandelbrojt1952} for details and properties of the log-convex minorants. The properties listed below are from \cite{cohen1968}.

\begin{lemma}\label{logconvexminorant}
Let $(M_n)_{n=0}^\infty$ be a sequence of positive real numbers such that $\liminf_{n\to\infty} M_n^{1/n}=+\infty$. Then there exists a log-convex sequence $(M_n\cm)_{n=0}^\infty$ and a strictly increasing sequence $(n_j)_{j=0}^\infty$ of integers with $n_0=0$ such that$:$
\begin{enumerate}
  \item $M_k\cm\leq M_k$ for all $k\in\No;$
  \item $M_{n_k}\cm=M_{n_k}$ for all $k\in\No;$
  \item for each $k\in\No,$ we have $M_j\cm/M_{j+1}\cm=M_{n_k}\cm/M_{n_k+1}\cm$ for all $j\in\No$ with $n_k\leq j<n_{k+1}$.
\end{enumerate}
\end{lemma}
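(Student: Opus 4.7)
The plan is to pass to logarithms and read off $(M_n\cm)$ and $(n_j)$ from the lower convex hull (Newton polygon) of the graph of the sequence $m_n:=\log M_n$. In additive language the conclusion becomes: construct a convex sequence $(m_n\cm)_{n=0}^\infty$ and a strictly increasing sequence $(n_j)_{j=0}^\infty$ in $\No$ with $n_0=0$ satisfying (a) $m_n\cm\leq m_n$, (b) $m_{n_k}\cm=m_{n_k}$, and (c) $(m_n\cm)$ is affine in $n$ on each $[n_k,n_{k+1}]\cap\No$. Log-convexity of $(M_n\cm)$ corresponds to discrete convexity of $(m_n\cm)$, while condition~(c) on the ratios $M_j\cm/M_{j+1}\cm$ becomes constancy of the first differences $m_{j+1}\cm-m_j\cm$ across each such interval.

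Next, I would construct the $n_j$ greedily. The growth hypothesis $\liminf_{n\to\infty}M_n^{1/n}=+\infty$ amounts to $m_n/n\to+\infty$; hence for any fixed $p\in\No$, the slopes $(m_j-m_p)/(j-p)$ tend to $+\infty$ as $j\to\infty$, so the infimum of such slopes over $j>p$ is attained at a smallest index. Setting $n_0:=0$ and $m_0\cm:=m_0$, and having chosen $n_k$, I would define
\[
n_{k+1}:=\min\left\{j>n_k:\frac{m_j-m_{n_k}}{j-n_k}=\inf_{i>n_k}\frac{m_i-m_{n_k}}{i-n_k}\right\},
\]
interpolate linearly,
\[
m_j\cm:=m_{n_k}+(j-n_k)\cdot\frac{m_{n_{k+1}}-m_{n_k}}{n_{k+1}-n_k}\qquad(n_k\leq j\leq n_{k+1}),
\]
and set $M_n\cm:=\exp(m_n\cm)$.

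Verifying (a)--(c) is then straightforward. Property~(b) is immediate, and (c) holds because the first differences $m_{j+1}\cm-m_j\cm$ over $n_k\leq j<n_{k+1}$ all equal the slope of the $k$th segment. For (a), any $j\in(n_k,n_{k+1})$ with $m_j<m_j\cm$ would give a slope $(m_j-m_{n_k})/(j-n_k)$ strictly below the chosen slope for $n_{k+1}$, contradicting its defining minimum. Log-convexity of $(M_n\cm)$ reduces to monotonicity of the segment slopes $s_k:=(m_{n_{k+1}}-m_{n_k})/(n_{k+1}-n_k)$, and $s_k\leq s_{k+1}$ follows because the slope from $(n_k,m_{n_k})$ to $(n_{k+2},m_{n_{k+2}})$ is the convex combination $\bigl((n_{k+1}-n_k)s_k+(n_{k+2}-n_{k+1})s_{k+1}\bigr)/(n_{k+2}-n_k)$, which is $\geq s_k$ by the minimality defining $n_{k+1}$.

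The main point requiring care is confirming that each infimum in the greedy step is actually attained; this is exactly where the assumption $\liminf_{n\to\infty}M_n^{1/n}=+\infty$ enters. The remainder is standard bookkeeping about the Newton polygon of $(m_n)$, for which the sources already cited in the lemma give complete accounts.
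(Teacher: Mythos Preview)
Your argument is correct: passing to $m_n=\log M_n$ and building the Newton polygon greedily is exactly the standard construction of the log-convex minorant, and your verification of (a)--(c) and of the slope monotonicity $s_k\le s_{k+1}$ is sound. Note, however, that the paper does not actually prove this lemma; it is stated as a summary of known facts, with the reader referred to Koosis and Mandelbrojt for the construction and to Cohen for the particular list of properties. Your sketch therefore supplies precisely the argument those references contain and which the paper chose to omit, so there is nothing to compare beyond observing that you have filled in the citation.
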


It is easy to see that if a  sequence $M=(M_n)_{n=0}^\infty$ of positive real numbers satisfies \eqref{quasisufficientsum} then the log-convex minorant $M\cm=(M_n\cm)_{n=0}^\infty$, as given by Lemma \ref{logconvexminorant}, satisfies \eqref{quasisufficientsum}. Moreover, if $M\cm$ satisfies \eqref{quasisufficientsum} then $M\cm$ satisfies $\sum_{n=0}^\infty M_n\cm/M_{n+1}\cm=\infty$; see \cite[Chapter~IV]{koosis1998} for details.

We will require lemmas to prove the main result. The first lemma is standard; see, for example, \cite[Proposition~1.17]{conway1978}.

\begin{lemma}\label{pathintderivest}
Let $\gamma\in\F$ and let $f\in D_1$. Then
\[
\abs{f(\gamma\pfi)}\leq\abs{f(\gamma\pin)}+\pl\gamma\abs{f\fdiff 1}_{\gamma\adj}.
\]
\end{lemma}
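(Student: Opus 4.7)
The plan is to reduce the inequality to the defining property of the $\F$-derivative and then apply the standard \emph{ML-estimate} for path integrals.

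First I would observe that, since $\F$ is by construction the collection of all subpaths and reverses of subpaths of the fixed admissible path $\Gamma$, any $\gamma \in \F$ is itself admissible and lies in $\Gamma\adj$. Because $f \in D_1 = \fd{1}{\Gamma\adj}$, the function $f$ has an $\F$-derivative $f\fdiff 1 \in C(\Gamma\adj)$, and the defining identity
\[
\int_\gamma f\fdiff 1(z)\dm z = f(\gamma\pfi) - f(\gamma\pin)
\]
holds for this particular $\gamma$. Rearranging gives $f(\gamma\pfi) = f(\gamma\pin) + \int_\gamma f\fdiff 1(z)\dm z$, and the triangle inequality yields
\[
\abs{f(\gamma\pfi)} \leq \abs{f(\gamma\pin)} + \babs{\int_\gamma f\fdiff 1(z)\dm z}.
\]

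Next I would bound the path integral using the length parametrisation. By the discussion preceding the statement, $\int_\gamma f\fdiff 1(z)\dm z = \int_{\gamma\plp} f\fdiff 1(z)\dm z$, and for an admissible (in particular rectifiable, injective on a co-null set) parametrisation the integral is bounded in modulus by $\pl\gamma\cdot\abs{f\fdiff 1}_{\gamma\adj}$. This is precisely the standard ML-inequality cited as \cite[Proposition~1.17]{conway1978}, applied to the continuous function $f\fdiff 1$ on $\gamma\adj$. Substituting this bound into the previous display gives the claim.

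There is no real obstacle here; the only point requiring a moment's care is that the inequality $\abs{\int_\gamma g(z)\dm z} \leq \pl\gamma\cdot\abs{g}_{\gamma\adj}$ is being used for an arbitrary rectifiable path (not just a piecewise smooth one), but this is exactly why we passed to the path-length parametrisation, and the cited reference covers it. Everything else is an immediate consequence of the definition of $\F$-derivative from the preceding section.
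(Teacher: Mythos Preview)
Your argument is correct and is exactly the standard one the paper has in mind: the paper does not actually give a proof of this lemma, but simply states that it is standard and cites \cite[Proposition~1.17]{conway1978}, which is precisely the ML-estimate you invoke after applying the defining identity of the $\F$-derivative and the triangle inequality.
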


Our next lemma is an $\F$-differentiability analogue of \cite[Lemma~2]{cohen1968}.

\begin{lemma}\label{cohenlemma}
Let $\gamma\in\F,$ let $M>0,$ let $m\in\N,$ and let $\sigma=\gamma\plp$. Let $s_0,\dotsc,s_{m-1}\in[0,\pl\sigma]$ such that $0\leq s_0<s_1<\dotsb<s_{m-1}< \pl\sigma$. Let $g\in D_m$ and suppose that $\abs{g\fdiff m(\sigma(t))}\leq M$ for all $t\in[s_0,\pl\gamma]$. Then$,$ for all $s\in (s_{m-1},\pl\gamma],$ we have
\[
\abs{g(\sigma(s))}\leq\sum_{p=0}^{m-1}\frac{\abs{g\fdiff p(\sigma(s_{m-p-1}))}(s-s_{m-p-1})^p}{p!}+\frac M{m!}(s-s_0)^m.
\]
\end{lemma}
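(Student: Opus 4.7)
The plan is to prove the estimate by induction on $m$, following the Taylor-type iteration from Cohen's proof of the Denjoy--Carleman theorem. Throughout, the key fact I will use is that $\sigma=\gamma\plp$ is parametrised by arc length, so each subpath $\sigma|[a,b]$ has length $b-a$ and one has the arc-length bound $\abs{\int_{\sigma|[a,b]}h(z)\dm z}\leq\int_a^b\abs{h(\sigma(u))}\dm u$ for $h\in C(\sigma\adj)$.

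For the base case $m=1$, I would apply Lemma \ref{pathintderivest} to the subpath $\sigma|[s_0,s]$; using $\pl{\sigma|[s_0,s]}=s-s_0$ and the hypothesis $\abs{g\fdiff 1(\sigma(t))}\leq M$ on $[s_0,\pl\gamma]$ yields $\abs{g(\sigma(s))}\leq\abs{g(\sigma(s_0))}+M(s-s_0)$, which matches the claim for $m=1$.

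For the inductive step, assume the statement for $m-1$ and take $g\in D_m$ with $\abs{g\fdiff m(\sigma(t))}\leq M$ on $[s_0,\pl\gamma]$. Since $g\fdiff 1\in D_{m-1}$ and $(g\fdiff 1)\fdiff{m-1}=g\fdiff m$, applying the inductive hypothesis to $g\fdiff 1$ with the truncated data $s_0<\dotsb<s_{m-2}$ gives, for all $u\in(s_{m-2},\pl\gamma]$,
\[
\abs{g\fdiff 1(\sigma(u))}\leq\sum_{p=0}^{m-2}\frac{\abs{g\fdiff{p+1}(\sigma(s_{m-p-2}))}(u-s_{m-p-2})^p}{p!}+\frac{M}{(m-1)!}(u-s_0)^{m-1}.
\]
Then for $s\in(s_{m-1},\pl\gamma]$, I would use the defining identity $g(\sigma(s))-g(\sigma(s_{m-1}))=\int_{\sigma|[s_{m-1},s]}g\fdiff 1(z)\dm z$ for the $\F$-derivative, pass to absolute values via the arc-length bound recalled above, substitute the inductive estimate, and integrate term by term using
\[
\int_{s_{m-1}}^s\frac{(u-s_{m-p-2})^p}{p!}\dm u\leq\frac{(s-s_{m-p-2})^{p+1}}{(p+1)!}
\]
(discarding the nonnegative term $(s_{m-1}-s_{m-p-2})^{p+1}/(p+1)!$), and analogously bounding the remainder integral by $\frac{M}{m!}(s-s_0)^m$. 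After the reindexing $q=p+1$, the resulting sum provides the $q\geq 1$ contributions to the required formula, and the boundary term $\abs{g(\sigma(s_{m-1}))}$ supplies the missing $q=0$ summand since $(s-s_{m-1})^0/0!=1$.

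The only real obstacle is indexing bookkeeping: checking that the shift $p\mapsto p+1$ converts each $s_{m-p-2}$ to the target $s_{m-q-1}$, and that absorbing the boundary evaluation as the $q=0$ term reconstructs $\sum_{q=0}^{m-1}\frac{\abs{g\fdiff q(\sigma(s_{m-q-1}))}(s-s_{m-q-1})^q}{q!}$ exactly. There is no substantive analytic difficulty beyond the single-step estimate of Lemma \ref{pathintderivest} and the standard arc-length integral bound.
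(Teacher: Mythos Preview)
Your proposal is correct and follows essentially the same approach as the paper: induction on $m$, applying the inductive hypothesis to $g\fdiff 1$ with the truncated points $s_0,\dotsc,s_{m-2}$, then integrating over $[s_{m-1},s]$ via the arc-length bound and reindexing. Your handling of the polynomial integrals (discarding the nonnegative lower-endpoint contribution) is equivalent to the paper's device of enlarging the integration interval, and arguably cleaner.
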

\begin{proof}
Note that, for each $a,b\in[0,\pl\gamma]$ with $a<b$, 
${\pl{\sigma|[a,b]}=b-a}$. We prove the result by induction on $m$. If $m=1$ then, by Lemma \ref{pathintderivest} applied to $\sigma|{[s_0,s]}$, we have
\[
\abs{g(z)}\leq\abs{g(z_0)}+M\pl{\sigma|{[s_0,s]}}=\abs{g(z_0)}+M(s-s_0).
\]
Let $m\in\N$ and assume that the result holds for $m-1$. Since $g\fdiff 1\in D_{m-1}$, it follows that, for each $s\in[s_{m-2},\pl\gamma]$, we have
\[
\abs{g\fdiff 1(\sigma(s))}\leq\sum_{p=0}^{m-2}\frac{\abs{g\fdiff {p+1}(\sigma(s_{m-p-2}))}(s-s_{m-p-2})^p}{p!}+\frac M{(m-1)!}(s-s_0)^{m-1}.
\]
Fix $t\in (s_{m-1},\pl\gamma]$. For each $j\in\{1,\dotsc,m-1\}$ let $\sigma_j:=\sigma|{[s_j,t]}$. Then
\begin{align}
\int_{\sigma_{m-1}}\abs{g\fdiff 1(\sigma(s))}\dm s&\leq \int_{\sigma_{m-1}}\sum_{p=0}^{m-2}\frac{\abs{g\fdiff {p+1}(\sigma(s_{m-p-2}))}(s-s_{m-p-2})^p}{p!}\dm s\nonumber\\
&\qquad{}+\int_{\sigma_{m-1}}\frac M{(m-1)!}(s-s_0)^{m-1}\dm s.\label{path length n-1 integral}
\end{align}
For each $j\in\{1,\dotsc,m-2\}$, we have
\begin{equation}\label{pathlength integral range bigger}
\int_{\sigma_{m-1}}(s-s_{m-j-2})^{j-1}\dm s\leq \int_{\sigma_{m-j-1}}(s-s_{m-j-2})^{j-1}\dm s=\frac{(s-s_{m-j-2})^{j}}{j!},
\end{equation}
and, by combining \eqref{pathlength integral range bigger} and \eqref{path length n-1 integral}, we obtain
\begin{equation}
\int_{\sigma_{m-1}}\abs{g\fdiff 1(\sigma(s))}\dm s\leq\sum_{p=1}^{m-1}\frac{\abs{g\fdiff {p}(\sigma(s_{m-p-1}))}(s-s_{m-p-1})^p}{p!}+\frac M{m!}(s-s_0)^{m}.\label{path length integrated path length n-1}
\end{equation}
But now
\[
\abs{g(\sigma(t))}\leq\abs{g(z_{m-1})}+\int_{\sigma_{m-1}}\abs{g\fdiff 1(\sigma(s))}\dm s,
\]
and combining this with \eqref{path length integrated path length n-1}, we obtain the desired result.
\end{proof}

We now check an easy special case of our result.

\begin{lemma}\label{easycase}
Let $M=(M_n)_{n=0}^\infty$ be a sequence of positive real numbers such that $\liminf_{n\to\infty}M_n^{1/n}<\infty$. Let $z_0\in\Gamma\adj,$ and let $f\in D$ with $\abs{f\fdiff k}_{\Gamma\adj}\leq M_k$ and $f\fdiff k(z_0)=0$ for all $k\in\No$. Then $f(z)=0$ for all $z\in\Gamma\adj$.
\end{lemma}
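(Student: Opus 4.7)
The plan is to apply Lemma \ref{cohenlemma} to $f$ on a suitable subpath of $\Gamma$ to obtain a Taylor-type remainder bound, then push the bound to $0$ along a subsequence using Stirling's inequality together with the subsequential growth hypothesis on $(M_n)$. Fix an arbitrary $z\in\Gamma\adj\setminus\{z_0\}$; the case $z=z_0$ is immediate. Pick parameters in the domain of $\Gamma$ that are mapped to $z_0$ and $z$ respectively; the subpath of $\Gamma$ with these endpoints, reversed if necessary, produces a path $\gamma\in\F$ with $\gamma\pin=z_0$ and $\gamma\pfi=z$, since $\F$ is closed under taking subpaths and reverses. Because $z\neq z_0$, $\gamma$ is non-constant, so its path-length parametrisation $\sigma:=\gamma\plp$ is well-defined, with $\sigma(0)=z_0$ and $\sigma(\pl\gamma)=z$.

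The main delicacy is that Lemma \ref{cohenlemma} demands strictly ordered nodes $0\leq s_0<s_1<\dotsb<s_{m-1}$, whereas I would like every node to sit at the pre-image of $z_0$. I circumvent this by applying the lemma with $s_j:=j\epsilon$ for $0\leq j\leq m-1$ and small $\epsilon>0$, taking $M:=M_m$ (valid because $\abs{f\fdiff m}_{\Gamma\adj}\leq M_m$) and terminal parameter $s:=\pl\gamma$. This yields
\[
\abs{f(z)}\leq\sum_{p=0}^{m-1}\frac{\abs{f\fdiff p(\sigma((m-p-1)\epsilon))}\,(\pl\gamma-(m-p-1)\epsilon)^p}{p!}+\frac{M_m(\pl\gamma)^m}{m!}.
\]
Letting $\epsilon\downarrow 0$ and invoking continuity of each $f\fdiff p$ together with the hypothesis $f\fdiff p(z_0)=0$, every summand vanishes, giving
\[
\abs{f(z)}\leq\frac{M_m(\pl\gamma)^m}{m!}\qquad(m\in\N).
\]

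To finish, I use $\liminf_{n\to\infty}M_n^{1/n}<\infty$ to choose a strictly increasing sequence $(n_k)_{k=1}^\infty$ in $\N$ and a constant $L<\infty$ with $M_{n_k}\leq L^{n_k}$ for every $k$. Combined with $n!\geq(n/e)^n$ (immediate from $e^n\geq n^n/n!$), the estimate above along this subsequence becomes
\[
\abs{f(z)}\leq\left(\frac{eL\pl\gamma}{n_k}\right)^{n_k},
\]
which tends to $0$ as $k\to\infty$. Hence $f(z)=0$, as required.
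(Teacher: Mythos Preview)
Your argument is correct, but it reaches the key Taylor-remainder estimate
\[
\abs{f(z)}\leq\frac{M_m\,\pl\gamma^{\,m}}{m!}\qquad(m\in\N)
\]
by a different route from the paper. The paper obtains this bound by an elementary downward induction on $k$ (from $k=n$ to $k=0$) using only Lemma~\ref{pathintderivest}, proving more generally that $\abs{f\fdiff k(z)}\leq M_n\,s^{n-k}/(n-k)!$ for $k\leq n$; in particular it never invokes Lemma~\ref{cohenlemma}. Your approach instead applies Lemma~\ref{cohenlemma} directly, planting the nodes $s_j=j\varepsilon$ near the origin and letting $\varepsilon\downarrow 0$ so that the hypothesis $f\fdiff p(z_0)=0$ kills every summand. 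This is a legitimate shortcut that reuses the heavier machinery already in place, whereas the paper's argument is self-contained and slightly more elementary. For the tail, the paper simply notes that $(Rs)^{n_k}/n_k!\to 0$ as the general term of an exponential series, avoiding the explicit Stirling bound you use; both finishes are fine.
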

\begin{proof}
Fix $z\in \Gamma\adj$. Let $\gamma\in\F$ such that $\gamma\pin=z_0$, $\gamma\pfi=z$, and $\gamma=\gamma\plp$. We first {\em claim} that, for each $k,n\in\No$ with $k\leq n$, we have
\begin{equation}
\abs{f\fdiff k(z)}\leq M_n\frac{s^{n-k}}{(n-k)!},
\end{equation}
where $s=\pl\gamma$.

Fix $n\in\No$.
We prove our claim by induction on $k\leq n$. First suppose that $k=n$ so that $n-k=0$. Let $z\in\gamma\adj$. We have
\[
\abs{f\fdiff{k-1}(z)}\leq\abs{f\fdiff{k-1}(\gamma\pfi)}+\int_\gamma\abs{f\fdiff k}_{\gamma\adj}\dm s=\int_{\gamma}\abs{f\fdiff k}_{\gamma\adj}\dm s,
\]
and applying the claim to $\abs{f\fdiff k(\zeta)}$, we have
\[
\abs{f\fdiff{k-1}(z)}\leq\int_{\gamma}\frac{M_ns^{n-k}}{(n-k)!}\dm s=\frac{M_ns^{n-k+1}}{(n-k+1)!}.
\]
This proves the claim.

We now see that $\abs{f(z)}\leq M_n s^n/n!$ for all $n\in\No$ and all $z\in\gamma\adj$. Now since $\liminf_{n\to\infty}M_n^{1/n}<\infty$, there exists $R>0$ such that $M_n<R^n$ for infinitely many $n\in\N$. Let $(n_k)_{k=1}^\infty$ be a strictly increasing sequence in $\N$ such that $M_{n_k}<R^{n_k}$ for all $k\in\N$. Then, for each $z\in\gamma\adj$, we have
\[
\abs{f(z)}\leq \frac{M_{n_k}s^{n_k}}{n_k!}<\frac{(Rs)^{n_k}}{n_k!}\to 0
\]
as $k\to\infty$. This holds for all $z\in\Gamma\adj$, so the result follows.
\end{proof}

Let $0<\alpha<1$. As in \cite{cohen1968}, we define $B_{j,k}\diff\alpha$, for $j,k\in\No$ with $k\geq j\geq 0$, as follows. For each $j\in\No$, let $B_{0,j}^{(\alpha)}=0$ and, for each $j\in\N$, let $B_{j,j}^{(\alpha)}=1$. For each $j,k\in\No$ with $k>j$, define $B_{j+1,k+1}^{(\alpha)}$ inductively by setting
\[
B_{j+1,k+1}^{(\alpha)}=B_{j,k+1}^{(\alpha)}+\alpha B_{j+1,k}^{(\alpha)}.
\]

Our main tool in the proof of the main theorem is the following lemma, which can be distilled from the proof of \cite[Lemma~1]{cohen1968} and Stirling's approximation. We omit the proof.

\begin{lemma}\label{cohenconstbounds}
Let $\alpha\in(0,1/4\re)$. Then there exists a constant $K>0$ such that $B_{j,k}\diff\alpha\leq K\alpha<1/2$ for all $j,k\in\N$ with $j<k$. Moreover$,$ for each $n\in\N,$ we have
\[
\sum_{j=0}^{n-1}\frac{((j+1)\alpha)^j}{j!}+2\frac{(n\alpha)^n}{n!}<2.
\]
\end{lemma}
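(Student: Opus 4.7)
The plan is to bound $B_{j,k}\diff\alpha$ using the Catalan generating function $C(\alpha)=\sum_{n\geq 0}C_n\alpha^n=(1-\sqrt{1-4\alpha})/(2\alpha)$, and to handle the second displayed inequality via Stirling's formula. After shifting indices, the defining recursion reads $B_{j,k}\diff\alpha=B_{j-1,k}\diff\alpha+\alpha B_{j,k-1}\diff\alpha$ for $0<j<k$. I would prove by induction on $j+k$ that
\[
B_{j,k}\diff\alpha\leq(\alpha C(\alpha))^{k-j}\qquad\text{for all }j,k\in\No\text{ with }j\leq k.
\]
The base cases $j=k$ (with $B_{j,j}\diff\alpha=1$) and $j=0$ (with $B_{0,k}\diff\alpha=0$) are immediate. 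For the inductive step, the hypothesis applied to the recursion gives
\[
B_{j,k}\diff\alpha\leq(\alpha C(\alpha))^{k-j-1}\bigl[(\alpha C(\alpha))^2+\alpha\bigr],
\]
and the Catalan functional equation $\alpha C(\alpha)^2-C(\alpha)+1=0$ rearranges to $(\alpha C(\alpha))^2+\alpha=\alpha C(\alpha)$, which closes the induction on the nose.

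Since $\alpha C(\alpha)=(1-\sqrt{1-4\alpha})/2$ lies in $(0,1/2)$ whenever $\alpha\in(0,1/(4\re))\subset(0,1/4)$, for $j<k$ the majorant specialises to $B_{j,k}\diff\alpha\leq\alpha C(\alpha)$. Setting $K:=C(\alpha)$ one then has $B_{j,k}\diff\alpha\leq K\alpha$ and $K\alpha=\alpha C(\alpha)<1/2$, which gives the first part.

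For the second displayed inequality I would invoke Stirling's lower bound $j!\geq\sqrt{2\pi j}(j/\re)^j$ for $j\geq 1$ together with the elementary estimate $(j+1)^j\leq\re\cdot j^j$ (from $(1+1/j)^j\leq\re$) to obtain
\[
\frac{((j+1)\alpha)^j}{j!}\leq\frac{\re\,(\re\alpha)^j}{\sqrt{2\pi j}}\qquad(j\geq 1).
\]
Because $\re\alpha<1/4$, summing the resulting geometric tail from $j=1$ and adding the $j=0$ contribution (which equals $1$) bounds $\sum_{j=0}^{n-1}((j+1)\alpha)^j/j!$ above by $1+\re/(3\sqrt{2\pi})<3/2$; a similar one-line estimate shows $2(n\alpha)^n/n!\leq 2\re\alpha<1/2$. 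Their sum is then strictly below $2$.

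The principal obstacle is identifying the correct majorant in the induction: once the Catalan generating function appears, its defining quadratic relation makes the inductive step close exactly, and thereafter the Stirling and geometric-series bookkeeping is entirely routine.
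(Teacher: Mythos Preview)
Your argument is correct. The paper itself omits the proof of this lemma, pointing instead to Cohen's original 1968 note together with Stirling's approximation, so there is no in-paper proof to compare against line by line.

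That said, your treatment of the first part is genuinely different in flavour from Cohen's. Cohen obtains the bound $B_{j,k}^{(\alpha)}\le K\alpha$ by a direct combinatorial estimate on the number of lattice paths contributing to $B_{j,k}^{(\alpha)}$, together with crude factorial bounds. Your approach---guessing the majorant $(\alpha C(\alpha))^{k-j}$ and closing the induction via the Catalan quadratic identity $(\alpha C(\alpha))^2+\alpha=\alpha C(\alpha)$---is cleaner and gives the sharper exponential decay $B_{j,k}^{(\alpha)}\le(\alpha C(\alpha))^{k-j}$ for free. It is worth noting that your constant $K=C(\alpha)$ depends on $\alpha$; this is consistent with the lemma as stated, and since $C(\alpha)\le C(1/4\re)$ for all $\alpha\in(0,1/4\re)$ one can equally take $K$ uniform, which is what the application in Theorem~\ref{newsufficiency} implicitly uses when letting $\alpha\to 0$.

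For the second inequality your route via Stirling's bound $j!\ge\sqrt{2\pi j}(j/\re)^j$ and the estimate $(1+1/j)^j\le\re$ is exactly the kind of ``Stirling bookkeeping'' the paper alludes to, and the numerics $1+\re/(3\sqrt{2\pi})<3/2$ and $2\re\alpha<1/2$ are easily checked.
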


Note that, if $\alpha\in(0,1/4\re)$, then
\begin{equation}
  B_{j+1,k+1}\diff\alpha\geq B_{j,k+1}\diff\alpha\label{B const inequality}
\end{equation}
for all $j,k\in\No$ with $k\geq j$.

We now state and prove our main result. The proof is essentially the one used in \cite{cohen1968}, adapted for $\F$-differentiation, and including additional details for the convenience of the reader.

\begin{theorem}\label{newsufficiency}
Let $\Gamma:[a,b]\to\C$ be an admissible path and let $\F$ denote the collection of all subpaths and reverses of subpaths of $\Gamma$. Let $(M_n)_{n=0}^\infty$ be a sequence of positive real numbers such that $\sum_{n=1}^\infty M_n^{-1/n}=\infty$. Suppose that $f\in\fd{\infty}{\Gamma\adj}$ and $z_0\in\Gamma\adj$ satisfy
\begin{equation}\label{sufficiency assumptions on f}
\abs{f\fdiff k}_{\Gamma\adj}\leq M_k\quad\text{and}\quad f\fdiff k(z_0)=0
\end{equation}
for all $k\in\No$. Then $f(z)=0$ for all $z\in\Gamma\adj$.
\end{theorem}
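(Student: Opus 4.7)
The plan is to adapt Cohen's proof of the classical Denjoy--Carleman theorem \cite{cohen1968}, replacing Taylor's estimate by the path-length estimate of Lemma \ref{cohenlemma}. I would begin with two reductions. If $\liminf_{n\to\infty} M_n^{1/n} < \infty$ then Lemma \ref{easycase} immediately gives the conclusion, so I assume $\liminf M_n^{1/n} = +\infty$ and appeal to Lemma \ref{logconvexminorant} to obtain a log-convex minorant $(M_n\cm)$ and indices $(n_k)$ with $M_{n_k}\cm = M_{n_k}$. As noted after that lemma, $(M_n\cm)$ still satisfies \eqref{quasisufficientsum} and, crucially, $\sum_{n=0}^\infty M_n\cm/M_{n+1}\cm = +\infty$; this quotient sum is what will drive the choice of partition spacings. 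Secondly, setting $\sigma = \Gamma\plp$ and $s_0 \in [0,\pl\Gamma]$ with $\sigma(s_0) = z_0$, the subpaths $\sigma|[s_0,\pl\Gamma]$ and the reverse of $\sigma|[0,s_0]$ both lie in $\F$, so it suffices to prove a one-sided local vanishing: there exists $\eta > 0$ with $f \equiv 0$ on $\sigma([s_0,s_0+\eta])$.

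For this local step I would fix $\alpha \in (0, 1/4\re)$ as in Lemma \ref{cohenconstbounds} and take $\eta > 0$ so small that an initial segment of the divergent quotient sum $\sum \alpha M_{n_k}\cm/M_{n_k+1}\cm$ exceeds $\eta$. For a target parameter $s \in (0,\eta]$ I would build a partition $s_0 = t_0 < t_1 < \dotsb < t_{m-1} < s_0+s$ whose successive gaps within the block $n_k \leq j < n_{k+1}$ are all equal to $\alpha M_{n_k}\cm/M_{n_k+1}\cm$, matching clause (c) of Lemma \ref{logconvexminorant}. Applying Lemma \ref{cohenlemma} inductively to $g = f\fdiff p$ for $p = m-1, m-2, \dotsc, 0$, and using $f\fdiff k(z_0) = 0$ to kill the initial-point contribution at every stage, the bounds $\abs{f\fdiff k}_{\Gamma\adj} \leq M_k$ combine with the blockwise structure of $M\cm$ to yield an estimate of the schematic form
\[
\abs{f(\sigma(s_0+s))} \leq \sum_{j=0}^{m-1} B_{0,j}\diff\alpha \cdot C_j + \frac{M_m}{m!}\, s^m,
\]
where the $B_{j,k}\diff\alpha$ are exactly the constants recursively defined before Lemma \ref{cohenconstbounds} and the $C_j$ simplify because the spacings are constant within each block. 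Specialising to $m = n_k$, so that $M_m = M_m\cm$, Lemma \ref{cohenconstbounds} bounds the sum while the tail is dominated by $(n_k\alpha)^{n_k}/n_k!$ and tends to zero as $k\to\infty$; hence $f(\sigma(s_0+s)) = 0$.

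Passing from the local statement to $f \equiv 0$ on all of $\Gamma\adj$ is a standard connectedness argument: the set $T \subseteq [0,\pl\Gamma]$ of parameter values whose image admits a subpath-neighbourhood on which $f$ vanishes is nonempty (by the local step at $s_0$) and open (by definition); it is closed because, for $s \in \overline T$, continuity of every $f\fdiff k$ together with $f\fdiff k(\sigma(s_n)) = 0$ at nearby $s_n \in T$ gives $f\fdiff k(\sigma(s)) = 0$, so the local step applies with new base point $\sigma(s)$. The main obstacle, where I expect the bulk of the work, is the combinatorial bookkeeping in the inductive application of Lemma \ref{cohenlemma}: verifying that the constants really coincide with $B_{j,k}\diff\alpha$ (with \eqref{B const inequality} propagating the bound up the tower), and that the blockwise choice of spacings causes the ratios $M_{j+1}\cm/M_j\cm$ to telescope into controlled powers of $\alpha$. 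The two reductions, the connectedness argument and the final invocation of Lemma \ref{cohenconstbounds} require no input beyond the switch from interval Taylor estimates to Lemma \ref{cohenlemma}, and proceed essentially as in \cite{cohen1968}.
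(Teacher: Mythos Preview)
Your overall framework matches the paper's: reduce via Lemma~\ref{easycase}, pass to the log-convex minorant, use the divergence of $\sum M_n\cm/M_{n+1}\cm$, build a partition with spacings $\alpha M_j\cm/M_{j+1}\cm$, and control everything through the constants $B_{j,k}\diff\alpha$ and Lemma~\ref{cohenconstbounds}. However, there is a genuine gap in the core step. You fix $\alpha\in(0,1/4\re)$ at the outset and then try to drive the bound to zero by sending the number $m$ of partition points to infinity. This cannot work. If the double induction is carried out correctly (as in the paper's claim \refcl jk), the bound one obtains on $\abs{f(\sigma(t))}$ over the partitioned interval is $B_{n,n+1}\diff\alpha M_0\cm\le K\alpha M_0\cm$, which is a \emph{fixed} positive constant when $\alpha$ is fixed, no matter how large $n$ is. Your schematic with coefficients $B_{0,j}\diff\alpha$ (all of which are zero by definition) is not what the induction actually produces; the relevant constant at the end is $B_{n,n+1}\diff\alpha$, not $B_{0,\cdot}\diff\alpha$. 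Likewise, the tail $M_m s^m/m!$ is not dominated by $(m\alpha)^m/m!$ for fixed $s>0$, since this would require $M_m^{1/m}s\le m\alpha$ while $M_m^{1/m}\to\infty$; in the actual argument the $(m\alpha)^m/m!$ term arises only when Lemma~\ref{cohenlemma} is applied over a single block of length $m\alpha R$ with $R=M_{n_r}\cm/M_{n_r+1}\cm$, and the factor $R^m$ is what converts $M_{n_{r+1}}\cm$ into $M_i\cm$.

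The paper (following Cohen) resolves this by making $\alpha$ depend on $n$: given a target point $z$, take the path $\gamma$ from $z_0$ to $z$, fix $n=n_p$ large, and set $\alpha=\pl\gamma\big/\sum_{j=0}^n M_j\cm/M_{j+1}\cm$. The partition then stretches across essentially all of $[0,\pl\gamma]$, the double induction gives $\abs{f(\sigma(t))}\le K\alpha M_0\cm$, and now letting $n\to\infty$ forces $\alpha\to 0$, hence $f(z)=0$. With this choice of $\alpha$ there is no need for a separate local-vanishing-plus-connectedness argument: one handles each $z\in\Gamma\adj$ directly. Your connectedness reduction is not wrong, but it is unnecessary once you let $\alpha$ vary with $n$, and in any case the local step you describe does not go through as written.
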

\begin{proof}
If $\liminf_{n\to\infty}M_n^{1/n}<\infty$ then the result follows from Lemma \ref{easycase}, so suppose that $\liminf_{n\to\infty} M_n^{1/n}=\infty$. By Lemma \ref{logconvexminorant}, there exists a log convex sequence $(M_n\cm)_{n=0}^\infty$ of positive real numbers and a strictly increasing sequence $(n_j)_{j=0}^\infty$ with $n_0=0$ such that:
\begin{enumerate}
  \item $M_k\cm\leq M_k$ for all $k\in\No;$
  \item $M_{n_k}\cm=M_{n_k}$ for all $k\in\No;$
  \item for each $k\in\No,$ we have $M_j\cm/M_{j+1}\cm=M_{n_k}\cm/M_{n_k+1}\cm$ for all $j\in\No$ with $n_k\leq j<n_{k+1}$.
\end{enumerate}
By the comments following Lemma \ref{logconvexminorant}, we have
\[
\sum_{n=0}^\infty\frac{M_n\cm}{M_{n+1}\cm}=\infty.
\]
Let $z\in\Gamma\adj$ and let $\gamma\in\F$ such that $\gamma\pin=z_0,$ $\gamma\pfi=z$ and $\sigma=\gamma\plp$.

Fix $n\in\N$ such that $n$ is an element in the sequence $(n_p)_{p=0}^\infty$ and such that ${\sum_{j=0}^n M_{j}\cm/M_{j+1}\cm>4\re\pl\sigma}$. Let ${\alpha:=\pl\sigma/(\sum_{j=0}^n M_j\cm/M_{j+1}\cm)}$. Then we have ${\alpha\in(0,1/4\re)}$ and so, by Lemma \ref{cohenconstbounds}, $B_{u,v}\diff \alpha<1/2$ for all $u,v\in\N$ with $v>u>0$.

Define the points $0=x_0<x_1<\dotsb<x_n\leq \pl\sigma$ such that
\[
x_j-x_{j-1}=\alpha M_{n-j}\cm/M_{n-j+1}\cm\qquad j=1,2,\dotsc,n.
\]

For each $j\in\{0,\dotsc,n\}$ we {\em claim} that, for each $k\in\No$, with $k\leq n-j+1$,
\begin{equation*}\label{sufficiency proof claim}
\abs{f\fdiff k(\sigma(t))}\leq B_{j,n-k+1}\diff\alpha M_k\cm\qquad(t\in[0,x_j]).\tag{$\clstar jk$}
\end{equation*}

The proof of the claim is by induction on $j$. Since $f\fdiff k(\sigma\pin)=0$ for each $k\in\No$, \refcl 0k holds for all $k\in\No$ with $k\leq n+1$.

Fix $j\in\{1,\dotsc,n\}$. Assume now that \refcl{j'}{k'} holds for all $j',k'\in\No$ with $j'<j$ and $k'\leq n-j'+1$. Set $i:=n-j+1$. We now prove \refcl{j}{k} holds for each $k\in\No$ with $k\leq i$ by backwards induction on $k$. We first check the base case. Suppose that $k=i$. If $i=n_r$ for some $r\in\No$, then $B_{j,j}\diff\alpha=1$ and $\abs{f\fdiff{i}(\sigma(t))}\leq M_{i}= M_{i}\cm$ for all $t\in[0,x_{j}]$ by \eqref{sufficiency assumptions on f}, and so \refcl jk holds.

Otherwise, $i\neq n_r$ for all $r\in\No$, in which case there exists $r\in\No$ such that $n_r<i<n_{r+1}\leq n$. For each $s\in[0,x_{j-1}]$, by \refcl{j-1}{i} and \eqref{B const inequality}, we have
\[
\abs{f\fdiff i(\sigma(s))}\leq B_{j-1,j}\diff\alpha M_{i}\cm\leq B_{j,j}\diff\alpha M_{i}\cm=M_{i}\cm,
\]
and so it remains to show that $\abs{f\fdiff i(\sigma(s))}\leq M_{i}\cm$ for all $s\in[x_{j-1},x_j]$.

As in \cite{cohen1968}, let ${m:=n_{r+1}-i}$ and let ${R:=M_{n_r}\cm/M_{n_r+1}\cm}$. Note that
\[
j-m=n-n_{r+1}+1\geq 1
\]
and, for each ${p\in\No}$ with ${p\leq m}$, we have $M_{i}\cm=M_{i+p}\cm R^p$. Also, for each ${\ell\in\N}$ with ${n_r\leq\ell<n_{r+1}}$, we have ${M_\ell\cm/M_{\ell+1}\cm=R}$ and so ${x_{j-p}-x_{j-p-1}=\alpha R}$ for each $p\in\N$ with $p\leq m$.  In particular, $M_i\cm=M_{n_{r+1}}\cm R^m$. For each $s\in[x_{j-1},x_j]$, by Lemma \ref{cohenlemma} (applied with $M=M_{n_{r+1}}\cm=M_{n_{r+1}}$, $g=f\fdiff i$ and points $s_0=x_{j-m},\dotsc,s_{m-1}=x_{j-1}$), we have
\begin{align*}
\abs{f\fdiff{i}(\sigma(s))}&\leq\sum_{p=0}^{m-1} \frac{\abs{f\fdiff{i+p}(\sigma(x_{j-p-1}))}(s-x_{j-p-1})^p}{p!}+\frac {M_{n_{r+1}}\cm}{m!}(s-x_{j-m})^m\\
&\leq \sum_{p=0}^{m-1}\frac{\abs{f\fdiff{i+p}(\sigma(x_{j-p-1}))}((p+1)\alpha R)^p}{p!}+\frac{M_{n_{r+1}}\cm}{m!}(m\alpha R)^m
\end{align*}
and, by applying \refcl{j-p-1}{i+p} for each $0\leq p\leq m-1$, we obtain
\begin{align*}
\abs{f\fdiff{i}(\sigma(s))}&\leq \sum_{p=0}^{m-1}\frac{B_{j-1-p,n-(i+p)+1}\diff\alpha M_{i+p}\cm((p+1)\alpha R)^p}{p!}+\frac{M_{n_{r+1}}\cm}{m!}(m\alpha R)^m\\
&=M_{i}\cm\left(\sum_{p=0}^{m-1}\frac{B_{j-1-p,j-p}\diff\alpha((p+1)\alpha)^p}{p!} +\frac{(m\alpha)^m}{m!}\right),
\end{align*}
Since $B_{u,v}\diff\alpha<1/2$ for all $u,v\in\No$ with $v>u>0$, we have
\[
\abs{f\fdiff i(\sigma(s))}\leq \frac{M_{i}\cm}{2}\left(\sum_{p=0}^{m-1}\frac{((p+1)\alpha)^p}{p!} +2\frac{(m\alpha)^m}{m!}\right),
\]
and so, by Lemma \ref{cohenconstbounds}, we have $\abs{f\fdiff i(\sigma(s))}<M_{i}\cm$. This concludes the proof of the base case $k=i$.

Now let $k\in\No$ with $k\leq i-1$ and assume that \refcl{j}{k+1} holds, i.e.,
\[
\abs{f\fdiff{k+1}(\sigma(t))}\leq B_{j,n-(k+1)+1}\diff\alpha M_{k+1}\cm
\]
for all $t\in[0,x_j]$. Let $s\in[0,x_j]$. If $s\in[0,x_{j-1}]$ then, by applying \refcl{j-1}{k} and \eqref{B const inequality}, we have
\[
\abs{f\fdiff k(\sigma(s))}\leq B_{j-1,n-k+1}\diff\alpha M_{k}\cm\leq B_{j,n-k+1}\diff\alpha M_{k}\cm.
\]
Thus we may assume that $s\in[x_{j-1},x_j]$. By Lemma \ref{pathintderivest}, we have
\[
\abs{f\fdiff k(\sigma(s))}\leq\abs{f\fdiff k(\sigma(x_{j-1}))}+(x_j-x_{j-1}) \sup\{\abs{f\fdiff{k+1}(\sigma(t))}:t\in[x_{j-1},x_j]\}.
\]
Applying \refcl{j-1}{k} to the first term and applying \refcl{j}{k+1} to the second term we obtain
\[
\abs{f\fdiff k(\sigma(s))}\leq B_{j-1,n-k+1}\diff\alpha M_k\cm+\alpha\frac{M_{n-j}\cm}{M_{n-j+1}\cm}B_{j,n-k}\diff\alpha M_{k+1}\cm.
\]
Since $M\cm$ is log-convex and $k\leq n-j$, we have $M_{n-j}\cm/M_{n-j+1}\cm\leq M_k\cm/M_{k+1}\cm$, so we obtain
\[
\abs{f\fdiff k(\sigma(s))}\leq M_k\cm\left(B_{j-1,n-k+1}\diff\alpha+\alpha B_{j,n-k}\diff\alpha\right)=B_{j,n-k+1}\diff\alpha M_k\cm.
\]
Thus \refcl jk holds, and both inductions may now proceed.
\smallskip

Now, by Lemma \ref{cohenconstbounds}, there exists a constant $K>0$ such that, for all $u,v\in\N$ with $v>u$, we have $B_{u,v}<\alpha K$. Thus $\abs{f(\sigma(t))}\leq K\alpha M_0$ for all $t\in[0,\pl\sigma]$. It follows that $f(\sigma(t))=0$ for all $t\in[0,\pl\sigma]$.In particular, $\abs{f(z)}=0$ and hence $f(z)=0$. Since $z\in\Gamma\adj$ was arbitrary, the above holds for all $z\in\Gamma\adj$. This completes the proof.
\end{proof}

In the remainder of this paper we adopt the following convention. Let $X$ be a semi-rectifiable compact plane set, let $\F$ be an effective collection of paths in $X$, and let $f\in\fd{\infty}{X}$. If there exists $k\in\No$ such that $\abs{f\fdiff k}_X=0$ then we write
\[
\sum_{j=1}^\infty \abs{f\fdiff j}_X^{-1/j}=+\infty.
\]

Our first corollary will be used in the next section.

\begin{corollary}\label{Useful Den Carl}
Let $\gamma$ be an admissible path in $\C,$ let $\F$ be an effective collection of paths in $\gamma\adj,$ and let $f\in\fd\infty{\gamma\adj}$. Suppose that
\begin{equation}\label{quasianaylticcrit1}
\sum_{j=1}^\infty\abs{f\fdiff j}^{-1/j}=+\infty.
\end{equation}
If there exists $z_0\in\gamma\adj$ such that $f\fdiff k(z_0)=0$ for all $k\in\No$ then $f$ is identically zero on $\gamma\adj$.
\end{corollary}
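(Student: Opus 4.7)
The plan is to reduce the corollary to Theorem \ref{newsufficiency}. I would take the majorising sequence $\tilde M_k := \max(\abs{f\fdiff k}_{\gamma\adj}, 1)$ for each $k\in\No$ and invoke the theorem with $\Gamma := \gamma$ and this sequence. The pointwise bounds $\abs{f\fdiff k}_{\gamma\adj} \leq \tilde M_k$ together with the vanishing $f\fdiff k(z_0) = 0$ for all $k$ are immediate from the hypotheses, so once the divergence of $\sum_{k=1}^\infty \tilde M_k^{-1/k}$ is established the theorem delivers the conclusion $f \equiv 0$ on $\gamma\adj$.

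The divergence of $\sum \tilde M_k^{-1/k}$ splits into two cases. In the non-degenerate case (all $\abs{f\fdiff k}_{\gamma\adj} > 0$), I would partition the indices into $I := \{k : \abs{f\fdiff k}_{\gamma\adj} \geq 1\}$ and its complement: each $k$ in the complement contributes $1$ to the sum, while each $k\in I$ contributes $\abs{f\fdiff k}_{\gamma\adj}^{-1/k}$. If the complement is infinite the sum diverges immediately; otherwise $I$ is cofinite, and $\sum_{k\in I}\abs{f\fdiff k}_{\gamma\adj}^{-1/k}$ differs by only finitely many terms from $\sum_{k=1}^\infty\abs{f\fdiff k}_{\gamma\adj}^{-1/k}=\infty$ and so is also infinite. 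In the degenerate case, $\abs{f\fdiff K}_{\gamma\adj}=0$ for some $K$; iterating the uniqueness part of Proposition \ref{F derive old properties}(a) applied to the zero function forces $f\fdiff k\equiv 0$ on $\gamma\adj$ for every $k\geq K$, whence $\tilde M_k=1$ for all such $k$ and $\sum_{k\geq K}1=\infty$.

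The main technical obstacle is a compatibility check. Theorem \ref{newsufficiency} is formulated using the specific effective collection $\F'$ of all subpaths and reverses of subpaths of $\Gamma = \gamma$, whereas the corollary allows $\F$ to be an arbitrary effective collection in $\gamma\adj$. Bridging this requires confirming that the $\F$-derivatives of $f$ also serve as $\F'$-derivatives, so that the theorem's differentiability hypothesis is genuinely in force. I would observe that $\F\cup\F'$ remains an effective collection in $\gamma\adj$, and then appeal to uniqueness (Proposition \ref{F derive old properties}(a)) to identify the two notions of derivative, completing the reduction.
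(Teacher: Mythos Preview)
Your overall strategy---set up a majorising sequence and invoke Theorem~\ref{newsufficiency}---is exactly the paper's. The paper simply takes $M_k=\abs{f\fdiff k}_{\gamma\adj}$ and applies the theorem in two lines; your choice $\tilde M_k=\max(\abs{f\fdiff k}_{\gamma\adj},1)$ and your explicit case-split verifying the divergence of $\sum\tilde M_k^{-1/k}$ are correct and in fact more careful than the paper's argument, which leans on the convention recorded just before the corollary to handle the degenerate case.

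You are also right to raise the compatibility question between the given effective collection $\F$ and the specific collection $\F'$ of Theorem~\ref{newsufficiency}; the paper's proof passes over this in silence. However, your proposed fix does not work as written. Knowing that $\F\cup\F'$ is effective and citing Proposition~\ref{F derive old properties}(a) tells you only that \emph{if} $f$ were $(\F\cup\F')$-differentiable then its derivative would be unique; it provides no mechanism for upgrading $\F$-differentiability to $\F'$-differentiability, which is exactly what is at stake. To close the gap one must argue directly that the $\F$-derivative $f\fdiff1$ satisfies $\int_\sigma f\fdiff1\dm z=f(\sigma\pfi)-f(\sigma\pin)$ for every subpath $\sigma$ of $\gamma$, and this does not follow from effectiveness of $\F\cup\F'$ alone. (In the paper's only application of the corollary the functions lie in $\od{\infty}{C_r}$, so Proposition~\ref{F derive old properties}(b) furnishes $\G$-differentiability for every effective $\G$ and the issue does not arise in practice.)
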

\begin{proof}
For each $k\in\No,$ set $M_k=\abs{f\fdiff k}_{\gamma\adj}$. By \eqref{quasianaylticcrit1}, the sequence $M=(M_k)_{k=0}^\infty$ satisfies \eqref{quasisufficientsum} and certainly $\abs{f\fdiff k}_{\gamma\adj}\leq M_k$ for all $k\in\No$. Suppose that there exists $z_0\in\gamma\adj$ such that $f\fdiff k(z_0)=0$ for all $k\in\No$. Then, by Theorem \ref{newsufficiency}, we have $f(\gamma\adj)\subseteq\{0\}$. This completes the proof.
\end{proof}

Our next corollary asserts the existence of an $\F$-quasianalytic algebra of the form $\fd{}{X,M}$.

\begin{corollary}
Let $X$ be a semi-rectifiable compact plane set$,$ let $\F$ be an effective collection of paths in $X,$ and let $M=(M_n)_{n=0}^\infty$ be an algebra sequence which satisfies $\eqref{quasisufficientsum}$. Then $\fd{}{X,M}$ is $\F$-quasianalytic.
\end{corollary}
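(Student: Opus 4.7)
The plan is to reduce the statement to Corollary \ref{Useful Den Carl}. Fix $\gamma\in\F$ and $z_0\in\gamma\adj$, and suppose that $f\in\fd{}{X,M}$ satisfies $f\fdiff k(z_0)=0$ for all $k\in\No$; by Definition \ref{Fquasidef} it suffices to show that $f$ vanishes on $\gamma\adj$.

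First, I would observe that $f|_{\gamma\adj}$ may be regarded as an element of $\fd\infty{\gamma\adj}$ with respect to the collection $\F'$ consisting of all subpaths of $\gamma$ together with their reverses, and that its $\F'$-derivatives are given by the restrictions $f\fdiff k|_{\gamma\adj}$. Indeed, every subpath of $\gamma$ already lies in $\F$ by effectiveness, so the integral identities defining the $\F$-derivatives transfer to these subpaths; the identity for a reversed path then follows from $\int_{-\eta}g(z)\dm z=-\int_\eta g(z)\dm z$. The collection $\F'$ is effective in $\gamma\adj$ since $\gamma\adj$ is itself the image of a path in $\F'$, and $\F'$ is closed under subpaths and admissible by construction.

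Next, I would use the convergence of $\sum_{j=0}^\infty\abs{f\fdiff j}_X/M_j$ to extract a constant $C>0$ with $\abs{f\fdiff j}_X\leq CM_j$ for every $j\in\No$, which in particular gives $\abs{f\fdiff j}_{\gamma\adj}\leq CM_j$. Provided each such norm is strictly positive,
\[
\abs{f\fdiff j}_{\gamma\adj}^{-1/j}\geq C^{-1/j}M_j^{-1/j},
\]
and since $C^{-1/j}\to 1$ as $j\to\infty$, the factor $C^{-1/j}$ is bounded below by a positive constant for $j\geq 1$, so the hypothesis $\sum_j M_j^{-1/j}=\infty$ transfers to $\sum_j\abs{f\fdiff j}_{\gamma\adj}^{-1/j}=\infty$; if some $\abs{f\fdiff k}_{\gamma\adj}$ vanishes, the sum is declared to be $+\infty$ by the stated convention, and the same conclusion holds.

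Finally, applying Corollary \ref{Useful Den Carl} to $f|_{\gamma\adj}$ with the effective collection $\F'$ and the point $z_0$ yields $f\equiv 0$ on $\gamma\adj$, as required. There is no real obstacle: the only point that demands modest care is the matching of $\F$-derivatives on $X$ with $\F'$-derivatives on $\gamma\adj$, after which the bound $\abs{f\fdiff j}_X\leq CM_j$ supplied by the $\fd{}{X,M}$-norm feeds directly into the hypothesis of Corollary \ref{Useful Den Carl}.
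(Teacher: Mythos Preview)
Your argument is correct and follows essentially the same route as the paper: both proofs reduce to Corollary \ref{Useful Den Carl} by verifying the divergence condition \eqref{quasianaylticcrit1} from the $\fd{}{X,M}$-norm bound. The only cosmetic difference is that the paper uses that the terms $\abs{f\fdiff j}_X/M_j$ tend to $0$, so $\abs{f\fdiff j}_X\leq M_j$ for all $j\geq N$, whereas you use boundedness of those terms to get $\abs{f\fdiff j}_X\leq CM_j$ for all $j$; either observation suffices, and you are arguably a little more careful in spelling out the identification of $\F$-derivatives on $X$ with $\F'$-derivatives on $\gamma\adj$, a point the paper leaves implicit.
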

\begin{proof}
Fix $f\in\fd{}{X,M}$. Then there exists $N\in\N$ large enough so that ${\abs{f\fdiff k}_X\leq M_k}$ for all ${k\in\No}$ with ${k\geq N}$. If there exists ${k\in\No}$ such that ${\abs{f\fdiff k}_X=0}$, then \eqref{quasianaylticcrit1} holds, so suppose that ${\abs{f\fdiff k}_X>0}$ for all ${k\in\No}$. Then we have
\[
\sum_{j=1}^\infty \abs{f\fdiff j}_X^{-1/j}\geq \sum_{j=N}^\infty\abs{f\fdiff j}_X^{-1/j}\geq \sum_{j=N}^\infty M_j^{-1/j}=\infty.
\]
Thus, in either case, $f$ satisfies \eqref{quasianaylticcrit1}. Thus, by Corollary \ref{Useful Den Carl}, if there exist $\gamma\in\F$ and $z_0\in\gamma\adj$ such that $f\fdiff k(z_0)=0$ for all $k\in\No$, then $f(\gamma\adj)\subseteq\{0\}$. It follows that $\fd{}{X,M}$ is $\F$-quasianalytic.
\end{proof}

Since $\od{\infty}{X}\subseteq\fd{\infty}{X}$, Theorem \ref{newsufficiency} generalises \cite[Theorem~1.11]{dales1973} to obtain the following.

\begin{corollary}
Let $M=(M_n)_{n=0}^\infty$ be a sequence of positive real numbers such that $\sum_{n=1}^\infty M_{n}^{-1/n}=\infty$. Let $\gamma$ be an admissible path in $\C$. Suppose that $f\in\od{\infty}{X},$ $f\diff k(\gamma\pin)=0$ and $\abs{f\diff k}\leq M_k$ for $k=0,1,2,\dotsc$. Then $f$ is identically zero on $\gamma\adj$.
\end{corollary}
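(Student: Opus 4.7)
The plan is to deduce this corollary directly from Theorem \ref{newsufficiency} by showing that the hypotheses of the theorem are satisfied with a suitably chosen collection of paths. The key conceptual point is that ordinary complex differentiability on a perfect compact plane set already implies $\F$-differentiability with the $\F$-derivative equal to the usual complex derivative, which is precisely the content of Proposition \ref{F derive old properties}(b).

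First I would set $\F$ to be the collection of all subpaths and reverses of subpaths of $\gamma$. Since $\gamma$ is admissible (rectifiable with no constant subpaths), so are all its subpaths and their reverses, and since subpaths of subpaths are subpaths, $\F$ is an effective collection of paths in $\gamma\adj$. Next, by Proposition \ref{F derive old properties}(b), for each $k\in\No$ the complex derivative $f\diff{k+1}$ is an $\F$-derivative of $f\diff k$ on $\gamma\adj$, and by uniqueness of $\F$-derivatives (Proposition \ref{F derive old properties}(a)) a straightforward induction gives $f\in\fd{\infty}{\gamma\adj}$ with $f\fdiff{k}=f\diff k$ on $\gamma\adj$ for every $k\in\No$.

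Finally, the hypotheses $\abs{f\diff k}\leq M_k$ and $f\diff k(\gamma\pin)=0$ translate to $\abs{f\fdiff k}_{\gamma\adj}\leq M_k$ and $f\fdiff k(z_0)=0$ with $z_0:=\gamma\pin\in\gamma\adj$, so all conditions of Theorem \ref{newsufficiency} are met, and the conclusion $f(z)=0$ for all $z\in\gamma\adj$ follows at once.

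There is no real obstacle here: the corollary is essentially just a translation of Theorem \ref{newsufficiency} from the $\F$-differentiable setting to the classical complex-differentiable setting, and all the substantive work has already been carried out in the proof of the theorem. The only subtlety worth verifying is that Proposition \ref{F derive old properties}(b) is indeed applicable iteratively to each $f\diff k$, which is immediate from $f\in\od{\infty}{X}$.
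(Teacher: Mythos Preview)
Your proposal is correct and follows exactly the approach the paper takes: the paper simply notes that $\od{\infty}{X}\subseteq\fd{\infty}{X}$ (which is the content of Proposition~\ref{F derive old properties}(b) applied iteratively) and invokes Theorem~\ref{newsufficiency}, so your argument is a faithful expansion of the paper's one-line justification.
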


We conclude this section with a note about $\F$-analyticity, as introduced in \cite{chaobankoh2012} (see also \cite{FM2015}). Let $X$ be a semi-rectifiable compact plane set, let $\F$ be an effective collection of paths in $X$, and let $f\in\fd{\infty}{X}$. We say that $f$ is {\em $\F$-analytic} if
\[
\limsup_{k\to\infty}\Biggl(\frac{\abs{f\fdiff k}}{k!}\Biggr)^{1/k}<\infty.
\]
This is a generalisation of the term {\em analytic} used in \cite{feinstein2004a,feinstein2004k}, and is used to find sufficient conditions for maps to induce homomorphisms between the algebras $\fd{}{X,M}$. (Note that, in \cite{feinstein2000a}, the term {\em analytic} was used for those functions on $X$ which extend to be analytic on a neighbourhood of $X$. This condition is stronger than in \cite{feinstein2004a,feinstein2004k}.)

Let $X$ be a semi-rectifiable compact plane set, let $\F$ be an effective collection of paths in $X$, and let $f\in\fd{\infty}{X}$. Using Theorem~\ref{newsufficiency}, we can show that if $f$ is $\F$-analytic then, for each $\gamma\in\F$ and $z\in\gamma\adj$, there exist $r>0$ and an analytic (in the usual sense) function $g:\ob{z}{r}\to\C$ such that $g|(\gamma\adj\cap\ob zr)=f|(\gamma\adj\cap\ob zr)$. From this, it follows that in fact, for each $\gamma\in\F$, there exist an open neighbourhood $U$ of $\gamma\adj$ and an analytic function $h:U\to\C$ such that $h|\gamma\adj=f|\gamma\adj$. We wish to thank Prof. J. K. Langley and Dr. D. A. Nicks for showing us how to prove the latter implication.

\section{Trivial Jensen measures without regularity}
\label{construction section}
We conclude the paper with an application of the results from the previous sections. We construct a locally connected compact plane set $X$ and an essential uniform algebra $A$ on $X$ such that $A$ does not admit any non-trivial Jensen measures but is not regular. This example will improve an example of the first author (\cite{feinstein2001}).

We begin with the relevant definitions.

\begin{definition}
Let $X$ be a compact Hausdorff space$,$ let $A$ be a uniform algebra on $X,$ and let $\varphi$ be a character on $A$. A probability measure $\mu$ on $X$ is a Jensen measure for $\varphi$ $($with respect to $A)$ if
\[
\log{\abs{\varphi(f)}}\leq \int_X\log{\abs{f}}\dm \mu\qquad(f\in A).
\]
We say that $A$ is {\em regular on $X$} if$,$ for each closed set $E\subseteq X$ and each point $x\in X\setminus E,$ there exists $f\in A$ such that $f(x)=0$ and $f(E)\subseteq\{0\}$. We say that $A$ is regular if the Gelfand transform of $A$ is regular on the character space $\Phi_A$ of $A$. We say that $A$ is {\em essential} if there exists no proper closed subset $E$ of $X$ such that $A$ contains every $f\in C(X)$ such that $f(y)=0$ for all $y\in E$.
\end{definition}

In the above definition we adopt the convention that $\log(0)=-\infty$. Let $X,A,\varphi$ be as in the above definition. It is standard that every Jensen measure for $\varphi$ is a representing measure for $\varphi$. Moreover, for each $\varphi\in\Phi_A$, there is a Jensen measure on $X$ for $\varphi$. Note that, for $x\in X$, the point-mass measure $\delta_x$ is a Jensen measure for $\varepsilon_{x}$, where (here, and for the remainder of the section) $\varepsilon_x$ is the {\em evaluation character at $x$ $($with respect to $A)$}. We say that a Jensen measure $\mu$ on $X$ for $\varepsilon_x$ is {\em trivial} if $\mu=\delta_x$.

Let $X$ be a compact plane set. Let $R_0(X)$ denote the set of restrictions to $X$ of rational functions with no poles on $X$. Let $R(X)$ denote the uniform closure of $R_0(X)$ in $C(X)$. It is standard that $R_0(X)$ is an algebra and that $R(X)$ is a natural uniform algebra on $X$. For the remainder of this section, all Jensen measures will be with respect to $R(X)$ unless otherwise specified.

For further details on uniform algebras, Jensen measures, and related topics, see \cite{browder1969,gamelin1984,dales2000,gamelin1978,stout1971}.

Let $x\in X$. Let $J_x$ denote the ideal in $R(X)$ of all functions which vanish on a neighbourhood of $x$. Let $M_x$ denote the ideal in $R(X)$ of all functions which vanish at $x$. Clearly $J_x\subseteq M_x$. We say that $x$ is a {\em point of continuity} (for $R(X)$) if, for all $y\in X\setminus\{x\}$ we have $J_y\nsubseteq M_x$. We say that $x$ is an {\em $R$-point} if, for all $y\in X\setminus\{x\}$, we have $J_x\nsubseteq M_y$. For further information see \cite{feinstein2000,feinstein2001,feinstein2012,FY2015}. (Note that in \cite{feinstein2012} points of continuity are referred to as {\em regularity points of type one} and $R$-points are referred to as {\em regularity points of type two}.)

It is standard that $R(X)$ is regular if and only if every point of $X$ is a point of continuity, and this holds if and only if every point of $X$ is an $R$-point. It is also standard that if $x$ is a point of continuity then the only Jensen measure for $\varepsilon_x$ is the point mass measure.

Let $X$ be a topological space and let $E$ be a subset of $X$. We denote by $\tint_{X}(E)$ the interior of $E$ with respect to the topological space $X$. In particular, if $E\subseteq\C$ then $\tint_{\C}E$ coincides with the usual interior of $E$.

For the remainder of this section, we denote the set of non-negative real numbers by $\Rp$. Let $X$ be a metric space, let $x\in X$, and let $r>0$. We denote the open ball in $X$ with centre $x$ and radius $r$ by $\ob[X]{x}{r}$. We the denote the corresponding closed ball by $\cb[X]{x}{r}$.

In the special case where $X=\C$, for each ${a\in\C}$ and ${r>0}$, we write ${\ob ar=\ob[\C]ar}$ and ${\cb ar=\cb[\C]ar}$. For each ${a\in\C}$, we set ${\ob a0=\emptyset}$ and ${\cb a0=\{a\}}$.

\begin{lemma}\label{subset interior points of continuity}
Let $X,Y$ be compact plane sets with $Y\subseteq X$. Suppose that $R(Y)$ is regular. Then each point $y\in\tint_X{(Y)}$ is a point of continuity for $R(X)$.
\end{lemma}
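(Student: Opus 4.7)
The plan is to verify the definition of ``point of continuity'' at $y_0$ directly: given any $y\in X\setminus\{y_0\}$, I will produce some $f\in R(X)$ that vanishes on a neighbourhood of $y$ in $X$ but satisfies $f(y_0)\neq 0$. The strategy is to borrow a function from $R(Y)$, using regularity of $R(Y)$, whose support sits in a small disc around $y_0$ lying inside $Y$ (which is possible since $y_0\in\tint_X(Y)$), extend by zero to all of $X$, and certify that the extension belongs to $R(X)$ via a Bishop localisation argument.

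First I would fix $y\in X\setminus\{y_0\}$ and then choose $\delta>0$ small enough that $\cb{y_0}{2\delta}\cap X\subseteq Y$ and $2\delta<\abs{y-y_0}$; both conditions hold for sufficiently small $\delta$ because $y_0\in\tint_X(Y)$ and $y\neq y_0$. The set $E:=Y\cap\{z:\abs{z-y_0}\geq\delta\}$ is closed in $Y$ and does not contain $y_0$, so regularity of $R(Y)$ yields $g\in R(Y)$ with $g(y_0)=1$ and $g|_E=0$. Define $f:X\to\C$ by $f=g$ on $Y$ and $f=0$ on $X\setminus Y$. To see $f\in C(X)$, note that $Y$ is closed in $X$ and that any $z\in\partial_X Y$ must lie in $E$: if instead $z\in\ob{y_0}{\delta}\cap X$, then since $\ob{y_0}{\delta}\cap X$ is open in $X$ and contained in $\cb{y_0}{2\delta}\cap X\subseteq Y$, we would have $z\in\tint_X(Y)$, contradicting $z\in\partial_X Y$; hence $g(z)=0$.

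To conclude $f\in R(X)$, I would apply Bishop's localisation theorem: for each $z\in X$ I need a closed disc $D$ centred at $z$ with $f|_{X\cap D}\in R(X\cap D)$. If $\abs{z-y_0}<2\delta$, a sufficiently small such $D$ is contained in $\cb{y_0}{2\delta}$, giving $X\cap D\subseteq Y$, and $f|_{X\cap D}=g|_{X\cap D}$ lies in $R(X\cap D)$ as a uniform limit of restrictions to $X\cap D$ of the $R_0(Y)$-approximants of $g$. If $\abs{z-y_0}>\delta$, a sufficiently small $D$ about $z$ is disjoint from $\cb{y_0}{\delta}$, and then $f$ vanishes identically on $X\cap D$. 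These two cases cover $X$, so $f\in R(X)$; moreover $f(y_0)=1$, and $\ob{y}{\delta}\cap X$ (which is disjoint from $\cb{y_0}{\delta}$ by the choice $2\delta<\abs{y-y_0}$) is a neighbourhood of $y$ on which $f\equiv 0$. The step I expect to be the main technical obstacle is precisely $f\in R(X)$: a naive zero-extension of an $R(Y)$-function need not lie in $R(X)$, and it is Bishop's theorem, together with the $2\delta$-versus-$\delta$ buffer chosen so that the two localisation cases overlap to cover $X$, that makes the argument succeed.
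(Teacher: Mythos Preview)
Your proof is correct and follows essentially the same approach as the paper: use regularity to obtain a function that equals $1$ at $y_0$ and vanishes outside a small disc lying in $Y$, extend by zero to $X$, and invoke Bishop's localisation theorem (the paper cites Gamelin for this step) to see the extension lies in $R(X)$. The only cosmetic difference is that the paper first passes to the subset $Y\cap\cb{y_0}{r}$ and uses that $R$ of this set inherits regularity, whereas you work directly in $R(Y)$; your buffer-disc argument with radii $\delta$ and $2\delta$ plays the same role and is equally valid.
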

\begin{proof}
Let $y\in\tint_X{(Y)}$, and let $x\in X$ with $x\neq y$. Then there exists $r>0$ such that $\cb[X]{y}{r}\subseteq Y$. Choose $\delta\in(0,r/3)$ such that $\abs{y-x}>2\delta$. Since $R(Y)$ is regular, it follows (from \cite[lemma~1.6]{feinheath2010}, for example) that $R(Y\cap\cb{y}{r})$ is regular. Set $E:=\cb[X]{y}{r}\setminus\ob[X]{y}{\delta}$. Then there exists a function $g\in R(Y\cap\cb[X] yr)$ such that $f(y)=1$ and $f(x)=0$ for all $x\in E$. Let $f\in C(X)$ be given by $f(z)=g(z)$ for all $z\in X\cap\cb[X] yr$ and $f(z)=0$ for all $z\in X\setminus\cb[X] yr$. It follows from \cite[Corollary~II.10.3]{gamelin1983} that $f\in R(X)$ and clearly $f$ vanishes on a neighbourhood of $x$. Thus $J_{x}\nsubseteq M_y$ (where these are the ideals in $R(X)$). It follows that $y$ is a point of continuity for $R(X)$ and so the proof is complete.
\end{proof}

The following is effectively \cite[Lemma~3.1]{FY2015}.

\begin{lemma}\label{punctured neighbourhood R point}
Let $X$ be a compact plane set and let $x\in X$. Suppose that there exists a neighbourhood $U$ of $x$ in $X$ such that every point in $U\setminus\{x\}$ is a point of continuity. Then $x$ is an $R$-point.
\end{lemma}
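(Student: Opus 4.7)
The plan is to verify the definition of $R$-point at $x$ directly: for every $y \in X \setminus \{x\}$ produce an $f \in R(X)$ that vanishes on a neighbourhood of $x$ and has $f(y) \neq 0$. I split on whether $y \in U$.

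If $y \in U$, the argument is immediate from the hypothesis. Indeed, $y \in U \setminus \{x\}$, so by hypothesis $y$ is a point of continuity for $R(X)$; writing the definition of point of continuity at $y$ and taking the ``other point'' to be $x$, we obtain $J_x \nsubseteq M_y$, which is exactly the required condition.

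The substantive case is $y \in X \setminus U$. Pick $r > 0$ small enough that $X \cap \cb{x}{r} \subseteq U$ and $\abs{y - x} > r$, set $Y := X \cap \cb{x}{r}$, and write $C_r := \{w \in \C : \abs{w - x} = r\}$. A preliminary observation, which is the key local reduction, is that every $z \in Y \setminus \{x\}$ is a point of continuity for $R(Y)$ as well as for $R(X)$: any $f \in R(X)$ vanishing on an $X$-neighbourhood of some $w \in Y \setminus \{z\}$ with $f(z) \neq 0$ restricts to an element of $R(Y)$ with the analogous property, since restrictions of rational functions with no poles on $X$ still have no poles on $Y$ and uniform limits restrict to uniform limits. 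I would then mimic the argument of Lemma \ref{subset interior points of continuity}: construct a function $h \in R(Y)$ that vanishes on some $Y$-neighbourhood of $x$ and equals $1$ on the boundary set $Y \cap C_r$, and define $f$ on $X$ by $f|Y = h$ and $f \equiv 1$ on $X \setminus \ob{x}{r}$. The two pieces agree on the overlap $Y \cap C_r$, so $f \in C(X)$, and Gamelin's Corollary II.10.3 (as used in the proof of Lemma \ref{subset interior points of continuity}) yields $f \in R(X)$; since $f$ vanishes on a neighbourhood of $x$ and $f(y) = 1$, we are done.

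The main obstacle lies in producing the local function $h$. My approach is to use the point-of-continuity structure on $Y$: each $z \in Y \cap C_r$ yields some $g_z \in R(Y)$ that vanishes on a $Y$-neighbourhood of $x$ and satisfies $g_z(z) \neq 0$, and by continuity $g_z$ is nonzero on an open neighbourhood $W_z \ni z$ in $Y$. Taking a finite subcover $W_{z_1}, \dots, W_{z_n}$ of the compact boundary $Y \cap C_r$, the delicate step is to combine and normalise the $g_{z_i}$ into a single element of $R(Y)$ that is identically $1$ on $Y \cap C_r$ while still vanishing on a common $Y$-neighbourhood of $x$. Two degenerate possibilities need separate treatment: if $x$ is isolated in $X$ then $\chi_{X \setminus \{x\}} \in R(X)$ already satisfies all requirements directly, and if $Y \cap C_r$ is empty then $Y$ is clopen in $X$ and Shilov's idempotent theorem puts $\chi_{X \setminus \ob{x}{r}}$ into $R(X)$ immediately. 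The gluing and normalisation argument on $Y \cap C_r$ is the substance of the proof and is precisely where the detailed construction of \cite[Lemma~3.1]{FY2015} is used.
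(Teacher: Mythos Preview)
The paper does not actually prove this lemma: it simply records that the statement is ``effectively \cite[Lemma~3.1]{FY2015}'' and gives no argument. So there is no in-paper proof to compare your sketch against, and your closing sentence---deferring the ``substance of the proof'' to the construction in \cite[Lemma~3.1]{FY2015}---is circular, since that is precisely the result being quoted.

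On the sketch itself, the case split and the reduction to the local set $Y=X\cap\cb{x}{r}$ are reasonable, and your verification that points of $Y\setminus\{x\}$ remain points of continuity for $R(Y)$ is correct. The genuine gap is in the gluing step. You ask only that $h\in R(Y)$ equal $1$ on the boundary set $Y\cap C_r$, and then invoke Gamelin's localness theorem exactly as in Lemma~\ref{subset interior points of continuity}. But look at that proof: the function $g$ there vanishes on the entire annular collar $E=\cb[X]{y}{r}\setminus\ob[X]{y}{\delta}$, so near any boundary point the extended function is identically $0$ on a full neighbourhood in $X$, and the local membership in $R$ is trivial. In your situation, at a point $z_0\in Y\cap C_r$ the extended $f$ equals $h$ on one side and $1$ on the other, and $h$ need not be $1$ on any $Y$-neighbourhood of $z_0$; there is no reason the glued function lies in $R(X\cap\bar D)$ for a small disc $\bar D$ about $z_0$. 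To make the localness argument go through you would need $h\equiv 1$ on an annular region $Y\cap(\cb{x}{r}\setminus\ob{x}{r'})$ for some $r'<r$, which is a substantially stronger requirement than what your finite-cover ``combine and normalise'' paragraph aims for. That strengthening is exactly the delicate part carried out in \cite{FY2015}, and your outline does not supply it.
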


As in \cite{FMY2014a} (see also \cite{FMY2014b}), we define abstract Swiss cheeses as follows.

\begin{definition}
An {\em abstract Swiss cheese} is a sequence $\mathbf{A}=((a_n,r_n))_{n=0}^\infty$ of elements of $\C\times\Rp$. Let $\mathbf{A}=((a_n,r_n))_{n=0}^\infty$ be an abstract Swiss cheese. Set
\[
X_{\mathbf A}:=\cb{a_0}{r_0}\setminus\left(\bigcup_{n=1}^\infty\ob{a_n}{r_n}\right),
\]
and set $\rho(\mathbf A)=\sum_{n=1}^\infty r_n$. We say that $\mathbf A$ is {\em classical} if $\rho(\mathbf A)<\infty,$ $r_0>0$ and for all $k\in \N$ with $r_k>0$ the following hold$:$
\begin{enumerate}
\item $\cb{a_k}{r_k}\subseteq\ob{a_0}{r_0};$
\item whenever $\ell\in\N$ with $r_\ell>0$ and $\ell\neq k,$ we have $\cb{a_k}{r_k}\cap\cb{a_\ell}{r_\ell}=\emptyset$.
\end{enumerate}
\end{definition}

We say that a compact plane set $X$ is a {\em Swiss cheese set} if there exists an abstract Swiss cheese $\mathbf A$ such that $X=X_{\mathbf A}$. We say that a Swiss cheese set $X$ is {\em classical} if there exists a classical abstract Swiss cheese $\mathbf A$ with $X=X_{\mathbf A}$.
If $X$ is a classical Swiss cheese set then $X$ is a uniformly regular (see the proof of \cite[Theorem~8.3]{dalefein2010}) and $R(X)$ is essential (see \cite[p.~167]{browder1969} or \cite[Theorem~1.8]{feinheath2010}). It follows that if $\mathbf A$ is classical then $X_{\mathbf A}$ is also connected and locally connected.

In \cite{feinstein2001}, the first author gave an example of a non-classical Swiss cheese set $X$ such that $R(X)$ has no non-trivial Jensen measures, but such that $R(X)$ is not regular. We shall show that there is a classical Swiss cheese set with these properties; this is the content of the following theorem, which is the main theorem of this section.

\begin{theorem}\label{trivial Jensen cheese main theorem}
There exists a classical abstract Swiss cheese $\mathbf A=((a_n,r_n))$ such that $R(X_{\mathbf A})$ is not regular and does not admit any non-trivial Jensen measures.
\end{theorem}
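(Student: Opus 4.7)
My plan is to adapt the non-classical construction of \cite{feinstein2001} to the classical setting, with the new quasianalyticity result (Theorem~\ref{newsufficiency}) replacing the corresponding tool of that paper. I would start by fixing an admissible Jordan path $\Gamma$ inside $\cb{0}{1}$ (for concreteness, take $\Gamma$ to be the segment $[0,1]$) together with a distinguished ``bad'' point $z_0=0\in\Gamma\adj$, and then seek a classical abstract Swiss cheese $\mathbf A=((a_n,r_n))$ with big disk $\cb{0}{1}$ whose holes are pairwise disjoint, contained in $\ob{0}{1}\setminus\Gamma\adj$, with $\sum r_n<\infty$, and which accumulate only at the point $0$. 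The centres and radii must be engineered so that three conditions hold simultaneously: (i) $\mathbf A$ is classical; (ii) the accumulation of holes at $0$ is dense enough to destroy point-of-continuity status at $0$; and (iii) for every $f\in R(X_{\mathbf A})$, the restriction $f|\Gamma\adj$ satisfies a Denjoy--Carleman-type bound $\abs{f\fdiff k}_{\Gamma\adj}\le C_f M_k$ for some sequence $(M_n)$, depending only on the cheese, with $\sum_{n=1}^\infty M_n^{-1/n}=\infty$. The bounds in (iii) would come from Cauchy estimates applied to the analytic function $f$ on $\tint_{\C}(X_{\mathbf A})$: the growth of the usual derivatives along $\Gamma\adj$ is governed by the distance from $\Gamma\adj$ to the nearest hole, and calibrating that distance to decay in a controlled way near $0$ should yield the required Denjoy--Carleman condition.

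Granting such a cheese, I would set $X:=X_{\mathbf A}$ and let $\F$ be the collection of subpaths and reverses of subpaths of $\Gamma$, and then argue in three steps. First, for each $x\in X\setminus\{0\}$, only finitely many holes meet a small closed ball around $x$, so the local sub-cheese $Y\subseteq X$ obtained by intersecting $X$ with such a ball is a finite classical Swiss cheese, for which $R(Y)$ is regular; Lemma~\ref{subset interior points of continuity} then shows that $x$ is a point of continuity for $R(X)$, and so the unique Jensen measure for $\varepsilon_x$ is $\delta_x$. Second, the accumulation of holes engineered in (ii) prevents $0$ from being a point of continuity, so $R(X)$ fails to be regular. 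Third, for the point $0$, condition (iii) together with Corollary~\ref{Useful Den Carl} places $R(X)|\Gamma\adj$ inside an $\F$-quasianalytic algebra; combining this with the Jensen inequality, in the manner of \cite{feinstein2001}, would force the unique Jensen measure for $\varepsilon_0$ to be $\delta_0$.

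The main obstacle is the construction itself, namely arranging (i)--(iii) simultaneously. Fast enough accumulation for non-regularity at $0$ fights against both the classical requirement of disjoint holes with $\sum r_n<\infty$ and the Denjoy--Carleman requirement on derivative growth, since denser or larger nearby holes push up the Cauchy bounds for $\abs{f\fdiff k}_{\Gamma\adj}$ near $0$. The existing non-classical construction of \cite{feinstein2001} serves as the blueprint; the key gain here is that Theorem~\ref{newsufficiency} only demands quasianalytic growth along the single admissible path $\Gamma$, rather than on a whole two-dimensional region, and this extra flexibility should be exactly what is needed to arrange the offending holes into a pairwise-disjoint classical family while preserving both the failure of regularity at $0$ and the triviality of Jensen measures there.
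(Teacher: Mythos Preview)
Your proposal has a genuine gap that would make the construction fail. If the holes of $\mathbf A$ accumulate only at the single point $0$, then for every $x\in X_{\mathbf A}\setminus\{0\}$ some small disc $\cb{x}{\delta}$ meets only finitely many holes, and hence $X_{\mathbf A}$ has non-empty interior in $\C$. But any interior point $x$ automatically carries a non-trivial Jensen measure for $\varepsilon_x$ with respect to $R(X_{\mathbf A})$: take normalised arclength on a small circle $\partial\ob{x}{\delta}\subseteq X_{\mathbf A}$ and use the sub-mean-value inequality for $\log\abs{f}$ with $f\in R_0(X_{\mathbf A})$. So the very feature you rely on to make points $x\neq 0$ ``nice'' (only finitely many nearby holes) forces $R(X_{\mathbf A})$ to admit non-trivial Jensen measures, defeating the theorem. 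Relatedly, your first step claims that for the local finite sub-cheese $Y$ one has $R(Y)$ regular; this is false. A closed disc minus finitely many open discs has $R(Y)=A(Y)$, which is not regular (any $f\in A(Y)$ vanishing on an open subset of $Y$ vanishes identically by the identity theorem), so Lemma~\ref{subset interior points of continuity} does not apply.

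The paper avoids this by a different architecture. It forces $\tint_{\C}(X_{\mathbf A})=\emptyset$ by building $X_{\mathbf A}$ from countably many pre-fabricated \emph{regular} Swiss cheese annuli (Proposition~\ref{regular annulus}) nested around a fixed circle $C_r$; each piece already has $R$ regular and hence empty interior. The locus of non-regularity is then the whole circle $C_r$, not a single point, and the failure of regularity is deduced from the quasianalyticity of $R(X)|C_r$ via Corollary~\ref{Useful Den Carl}, with the needed derivative bounds coming from Lemma~\ref{R(X)_cheese_estimates}. Triviality of Jensen measures is obtained not by showing every point is a point of continuity, but by arranging that the set $E$ of non-continuity points has area zero and then invoking Lemma~\ref{non R points in support} together with Corollary~\ref{jensen measures supports corollary} (which crucially uses $\tint_{\C}X=\emptyset$). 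If you wish to salvage your single-bad-point idea, you would at minimum need to replace the ``locally finite'' cheese near each $x\neq 0$ by something with empty interior and $R$ regular there; that is essentially what Proposition~\ref{regular annulus} provides in the paper's construction.
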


Most of the remainder of this section is devoted to the proof of this theorem. We require some preliminary results. The following proposition is \cite[Lemma~4.1]{FY2015}.

\begin{proposition}\label{non-trivial representing measure criteria}
Let $X$ be a compact plane set$,$ let $Y$ be a non-empty closed subset of $X,$ and let $x\in Y$. Suppose that no bounded component of $\C\setminus Y$ is contained in $X,$ and that there exists a non-trivial representing measure $\mu$ for $\varepsilon_x$ with respect to $R(X)$ such that $\supp\mu\subseteq Y$. Then $\mu$ is a non-trivial representing measure for $\varepsilon_x$ with respect to $R(Y),$ and $R(Y)\neq C(Y)$.
\end{proposition}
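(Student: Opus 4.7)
The plan is to show that $\mu$ represents $\varepsilon_x$ on $R(Y)$ by proving the identity $f(x)=\int_Y f\,\dm\mu$ first for $f\in R_0(Y)$ and then extending by uniform density. The non-triviality claim and the conclusion $R(Y)\neq C(Y)$ will then follow almost immediately.

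The first step is pole-shifting. Fix $f\in R_0(Y)$; then $f$ is a rational function whose (finitely many) poles lie in $\C\setminus Y$. Each pole lies in some connected component of $\C\setminus Y$. By the hypothesis, each bounded component $V$ of $\C\setminus Y$ contains at least one point $q_V\in\C\setminus X$. The standard Runge-type pole-shifting lemma (see, e.g., \cite[Chapter~II]{gamelin1984}) asserts that if $V$ is a component of $\C\setminus Y$ and $p,q\in V$, then on $Y$ the function $1/(z-p)$ is a uniform limit of polynomials in $1/(z-q)$; and for the unbounded component, $1/(z-p)$ is a uniform limit of polynomials in $z$. Applying this to each pole of $f$ (shifting poles in a bounded component $V$ to the chosen $q_V\notin X$, and shifting poles in the unbounded component to $\infty$) yields rational functions $g_n$, whose poles all lie in $\C\setminus X$, with $g_n\to f$ uniformly on $Y$. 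In particular $g_n\in R_0(X)\subseteq R(X)$.

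The second step uses $\mu$. Since $\mu$ is a representing measure for $\varepsilon_x$ with respect to $R(X)$, and since $x\in Y$, we have, for every $n$,
\[
g_n(x)=\int_X g_n\,\dm\mu=\int_Y g_n\,\dm\mu,
\]
the second equality because $\supp\mu\subseteq Y$. The $g_n$ are continuous on $Y$ and converge uniformly on $Y$ to $f$; they also converge at $x\in Y$ to $f(x)$. Letting $n\to\infty$ gives
\[
f(x)=\int_Y f\,\dm\mu\qquad(f\in R_0(Y)),
\]
and this identity extends to all $f\in R(Y)$ by uniform density of $R_0(Y)$ in $R(Y)$ together with finiteness of $\mu$. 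Hence $\mu$ is a representing measure for $\varepsilon_x$ with respect to $R(Y)$; it is non-trivial because it was non-trivial as a measure on $X$ (so $\mu\neq\delta_x$ even regarded on $Y$). Finally, if $R(Y)=C(Y)$ then the unique representing measure for the evaluation character $\varepsilon_x$ on $C(Y)$ would be $\delta_x$, contradicting non-triviality of $\mu$; so $R(Y)\neq C(Y)$.

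The only genuinely delicate point is the first step: one must verify that the pole-shifting is performed correctly so that, after shifting, every pole of the approximant lies in $\C\setminus X$ (not merely in $\C\setminus Y$). This is precisely why the hypothesis that no bounded component of $\C\setminus Y$ is contained in $X$ is needed; it guarantees a target point $q_V\in V\setminus X$ in each bounded component, and Runge's theorem then supplies the uniform approximation on $Y$ by functions holomorphic on a neighbourhood of $X$. Everything else is a straightforward passage to the limit.
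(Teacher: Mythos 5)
The paper does not actually prove this proposition: it is stated verbatim as \cite[Lemma~4.1]{FY2015} and simply cited, so there is no in-paper proof to compare against. Your argument is correct, and it is the natural one: the hypothesis that no bounded component of $\C\setminus Y$ lies inside $X$ is exactly what lets the Runge pole-shifting land all poles in $\C\setminus X$, which shows that $R_0(X)|_Y$ is uniformly dense in $R(Y)$; the rest is a routine passage to the limit using $\supp\mu\subseteq Y$, together with the standard fact that the only representing measure for $\varepsilon_x$ on $C(Y)$ is $\delta_x$. This is almost certainly the same method used in the cited source, so there is nothing to add beyond noting that the paper outsources the proof.
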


Note that, if $\tint_{\C}X=\emptyset$ then the condition on bounded components of $\C\setminus Y$ is automatically satisfied.

Let $X$ be a compact plane set with $\tint_{\C} X=\emptyset$, let $x\in X$, and let $\mu$ be a non-trivial representing measure for $\varepsilon_x$. Let $Y=\supp\mu\cup\{x\}$, where $\supp\mu$ denotes the closed support of $\mu$. Then, by the above, we must have $R(Y)\neq C(Y)$. In particular, as noted in \cite{FY2015}, $Y$ must have positive area. (See also the Hartogs-Rosenthal theorem \cite[Corollary~II.8.4]{gamelin1984}.) Combining these observations with Proposition \ref{non-trivial representing measure criteria} gives the following corollary, which we use below.

\begin{corollary}\label{jensen measures supports corollary}
Let $X$ be a compact plane set with $\tint_{\C}(X)=\emptyset,$ let $E$ be a closed subset of $X,$ and let $x\in E$. Suppose that $E$ has area $0,$ that $\mu$ is a Jensen measure for $\varepsilon_x$ with respect to $R(X),$ and $\mu$ is supported on $E$. Then $\mu$ is trivial.
\end{corollary}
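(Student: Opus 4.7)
The plan is to argue by contradiction, assembling the pieces already laid out in the discussion preceding the corollary. Suppose $\mu$ is non-trivial, i.e., $\mu\neq\delta_x$. Since every Jensen measure is a representing measure, $\mu$ is then a non-trivial representing measure for $\varepsilon_x$ with respect to $R(X)$.

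Next I would set $Y:=\supp\mu\cup\{x\}$. This is a non-empty closed subset of $X$ (it is a finite union of closed sets, and $\supp\mu\subseteq E\subseteq X$, with $x\in E\subseteq X$), and clearly $\supp\mu\subseteq Y$. The hypothesis $\tint_{\C}X=\emptyset$ ensures that $\C\setminus Y$ has no bounded component contained in $X$, since $X$ itself contains no non-empty open subset of $\C$. Hence Proposition \ref{non-trivial representing measure criteria} applies and yields $R(Y)\neq C(Y)$.

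On the other hand, $Y\subseteq E$ and $E$ has two-dimensional Lebesgue measure zero, so $Y$ also has area zero. The Hartogs--Rosenthal theorem (\cite[Corollary~II.8.4]{gamelin1984}, cited in the paragraph preceding the corollary) then gives $R(Y)=C(Y)$, contradicting the previous paragraph. Therefore no non-trivial such $\mu$ can exist, and $\mu=\delta_x$.

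There is essentially no serious obstacle here: the corollary is a direct amalgamation of Proposition \ref{non-trivial representing measure criteria} with the Hartogs--Rosenthal theorem, and the only thing to verify is that the hypotheses of the proposition are genuinely met by $Y=\supp\mu\cup\{x\}$, which follows at once from $\tint_{\C}X=\emptyset$ and the fact that $\mu$ is a non-trivial representing (and not merely Jensen) measure. The proof is therefore short; I would write it in just a few lines.
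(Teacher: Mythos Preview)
Your proposal is correct and follows precisely the route the paper indicates in the paragraph immediately preceding the corollary: pass from Jensen to representing measure, set $Y=\supp\mu\cup\{x\}$, use $\tint_{\C}X=\emptyset$ to verify the bounded-component hypothesis of Proposition~\ref{non-trivial representing measure criteria}, conclude $R(Y)\neq C(Y)$, and then contradict this via Hartogs--Rosenthal since $Y\subseteq E$ has area zero. There is nothing to add.
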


We also require the following lemma, which is a special case of \cite[Lemma~2.1]{FY2015}.

\begin{lemma}\label{non R points in support}
Let $X$ be a compact plane set and let $x\in X$. Suppose that $\mu$ is a non-trivial Jensen measure for $x,$ and let $F$ be the closed support of $\mu$. Then$,$ for all $y\in F\setminus\{x\},$ we have $J_y\subseteq M_x$. Thus $x$ is not a point of continuity and no point of $F\setminus\{x\}$ is an $R$-point.
\end{lemma}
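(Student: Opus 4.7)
The plan is a direct application of the Jensen inequality defining $\mu$ as a Jensen measure, using the fact that the closed support $F$ meets every open neighbourhood of $y$ in positive $\mu$-measure.

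First I would fix $y\in F\setminus\{x\}$ and $f\in J_y$, so that $f$ vanishes on some open neighbourhood $U$ of $y$ in $X$. Since $y\in F=\supp\mu$, we have $\mu(U)>0$. On $U$, the extended-real function $\log\abs f$ is identically $-\infty$. Everywhere on $X$ it satisfies $\log\abs f\leq\log\abs f_X<\infty$, so the positive part of $\log\abs f$ is bounded (hence $\mu$-integrable), while the negative part picks up infinite mass on $U$. Thus the Jensen integral $\int_X\log\abs f\,\dm\mu$ equals $-\infty$. Applying the Jensen inequality
\[
\log\abs{f(x)}=\log\abs{\varepsilon_x(f)}\leq\int_X\log\abs f\,\dm\mu=-\infty,
\]
we conclude $f(x)=0$, so $f\in M_x$. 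Since $f\in J_y$ was arbitrary, $J_y\subseteq M_x$.

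For the final sentence: because $\mu$ is a non-trivial Jensen measure for $\varepsilon_x$, the support $F$ strictly contains $\{x\}$, so there actually exists $y\in F\setminus\{x\}$ witnessing $J_y\subseteq M_x$; by the very definition recalled in the paper, this means $x$ is not a point of continuity for $R(X)$. Likewise, for any such $y\in F\setminus\{x\}$, the inclusion $J_y\subseteq M_x$ with $x\neq y$ shows that $y$ is not an $R$-point.

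The only subtle point is the justification that $\int_X\log\abs f\,\dm\mu=-\infty$, for which one must know that $y\in\supp\mu$ forces $\mu(U)>0$ on every open neighbourhood $U$ of $y$ (which is just the definition of the closed support) and that $\log\abs f$ is a Borel function bounded above. Both are immediate, so I do not anticipate any real obstacle; the lemma is essentially a one-line deduction from the Jensen inequality together with the definition of $\supp\mu$.
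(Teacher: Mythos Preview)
Your argument is correct. The paper does not actually give its own proof of this lemma; it simply records it as a special case of \cite[Lemma~2.1]{FY2015}, and your direct deduction from the Jensen inequality together with the definition of closed support is precisely the standard argument one expects there (the only trivial edge case you might mention explicitly is $f\equiv 0$, where $\log\abs{f}_X$ is not finite but $f\in M_x$ is immediate).
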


The following estimates on derivatives are standard. See, for example \cite[Lemma~4.4]{FY2015}. (This result also appears in \cite{feinstein2001} but with some typographical errors.)

\begin{lemma}\label{R(X)_cheese_estimates}
Let $\mathbf A=((a_n,r_n))_{n=1}^\infty$ be an abstract Swiss cheese$,$ and let $z\in\C$. For each $n\in\N,$ let $d_n$ denote the distance from $\ob{a_n}{r_n}$ to $z$. Let $d_0=r_0-\abs{z-a_0}$. Suppose that $d_n>0$ for all $n\in\No$. Then $z\in X_{\mathbf A}$ and$,$ for all $f\in R_0(X_{\mathbf A})$ and $k\in\No,$ we have
\[
\abs{f\diff k(z)}\leq k!\sum_{j=0}^\infty\frac{r_j}{d_j^{k+1}}\abs{f}_{X_{\mathbf A}}.
\]
\end{lemma}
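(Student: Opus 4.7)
The plan is to first verify the containment $z\in X_{\mathbf{A}}$ and then bound the derivatives of $f$ at $z$ via a Cauchy-integral representation on a suitable finite-boundary region. The containment is immediate: $d_0>0$ gives $\abs{z-a_0}<r_0$, so $z\in\ob{a_0}{r_0}\subseteq\cb{a_0}{r_0}$, and for each $n\geq 1$ the condition $d_n>0$ forces $z\notin\ob{a_n}{r_n}$; hence $z\in X_{\mathbf{A}}$.

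For the derivative estimate, fix $f\in R_0(X_{\mathbf{A}})$: this is a rational function with only finitely many poles, all lying in $\C\setminus X_{\mathbf{A}}$. Let $S\subseteq\N$ be the finite set of indices $n$ such that $\ob{a_n}{r_n}$ contains a pole of $f$, and set
\[
\Omega:=\cb{a_0}{r_0}\setminus\bigcup_{n\in S}\ob{a_n}{r_n}.
\]
Then $f$ is holomorphic on an open neighbourhood of $\Omega$, and the hypotheses $d_0>0$ and $d_n>0$ for $n\in S$ place $z$ in the interior of $\Omega$. Cauchy's integral formula for derivatives yields
\[
f\diff k(z)=\frac{k!}{2\pi i}\int_{\partial\Omega}\frac{f(\zeta)}{(\zeta-z)^{k+1}}\,d\zeta,
\]
with $\partial\Omega$ traversed positively: the outer circle $\partial\cb{a_0}{r_0}$ counter-clockwise and the inner circles $\partial\cb{a_n}{r_n}$, $n\in S$, clockwise.

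Applying the standard $ML$-estimate on each circle, using $\abs{\zeta-z}\geq d_0$ on the outer circle (of length $2\pi r_0$) and $\abs{\zeta-z}\geq d_n$ on each inner circle (of length $2\pi r_n$), yields
\[
\abs{f\diff k(z)}\leq\frac{k!\,r_0}{d_0^{k+1}}\abs{f}_{\partial\cb{a_0}{r_0}}+\sum_{n\in S}\frac{k!\,r_n}{d_n^{k+1}}\abs{f}_{\partial\cb{a_n}{r_n}}.
\]

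The remaining step, and the one where I expect the real technical work to lie, is to replace each supremum $\abs{f}$ on a boundary circle by $\abs{f}_{X_{\mathbf{A}}}$. In the classical case each circle $\partial\cb{a_j}{r_j}$ for $j\in\{0\}\cup S$ already lies entirely within $X_{\mathbf{A}}$, so the replacement is immediate; the full series $\sum_{j=0}^\infty r_j/d_j^{k+1}$ is then recovered by adjoining the non-negative terms for $j\in\N\setminus S$. In the fully abstract setting, arcs of a boundary circle may dip into further removed disks $\ob{a_m}{r_m}$ with $m\notin S$; since $f$ is still holomorphic on those arcs, one iteratively re-routes the contour via arcs of $\partial\cb{a_m}{r_m}$, forcing the integration path into $X_{\mathbf{A}}$. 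Each additional circle invoked contributes at most $k!\,r_m\abs{f}_{X_{\mathbf{A}}}/d_m^{k+1}$, which is again absorbed by the full sum over $j\in\No$, giving the stated inequality.
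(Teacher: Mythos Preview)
The paper does not supply its own proof of this lemma; it is quoted as standard with a reference to \cite[Lemma~4.4]{FY2015} and \cite{feinstein2001}. Your Cauchy-integral-plus-ML approach is the expected one, and your argument is complete and correct in the classical case, which is in fact the only case invoked later in the paper (Lemma~\ref{construction} builds a classical abstract Swiss cheese).

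For a general abstract Swiss cheese, however, your final paragraph does not close the gap. A minor point: the set $S$ you define (\emph{all} $n$ with $\ob{a_n}{r_n}$ containing a pole) may be infinite when disks overlap; this is easily repaired by selecting one index per pole. More seriously, the ``iterative re-routing'' is not a well-defined procedure. A single re-routing step replaces an arc lying in $\ob{a_m}{r_m}$ by an arc of $\partial\cb{a_m}{r_m}$, but that new arc may itself enter further removed disks, and there is no reason the process terminates or that any limiting contour is rectifiable. Your assertion that ``each additional circle invoked contributes at most $k!\,r_m\abs{f}_{X_{\mathbf A}}/d_m^{k+1}$'' presupposes that the surviving arc of $\partial\cb{a_m}{r_m}$ already lies in $X_{\mathbf A}$, which is precisely the point at issue.

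A clean way to finish the general case is to bypass re-routing altogether. For $N$ large enough that every pole of $f$ lies in $(\C\setminus\cb{a_0}{r_0})\cup\bigcup_{n=1}^N\ob{a_n}{r_n}$, set $\Omega_N:=\cb{a_0}{r_0}\setminus\bigcup_{n=1}^N\ob{a_n}{r_n}$; then $z$ is interior to $\Omega_N$ and $f$ is holomorphic on a neighbourhood of $\Omega_N$. Cauchy's formula over $\partial\Omega_N$ (a finite union of circular arcs, with the arc on $\partial\cb{a_j}{r_j}$ having length at most $2\pi r_j$ and satisfying $\abs{\zeta-z}\ge d_j$) gives
\[
\abs{f\diff k(z)}\le k!\sum_{j=0}^N \frac{r_j}{d_j^{k+1}}\,\abs{f}_{\Omega_N}.
\]
Now let $N\to\infty$: the sets $\Omega_N$ decrease to $X_{\mathbf A}$, and since $f$ is continuous on a neighbourhood of $X_{\mathbf A}$ a routine compactness argument gives $\abs{f}_{\Omega_N}\to\abs{f}_{X_{\mathbf A}}$, yielding the stated inequality.
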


Our construction will use the following proposition, which is a combination of \cite[Lemma~8.5]{FMY2014a} and, for example, \cite[Example~2.9]{feinheath2010}.

\begin{proposition}\label{regular annulus}
Let $a\in\C,$ $\lambda_1\geq 0,$ $\lambda_0>\lambda_1,$ and $\varepsilon>0$. Then there exists a classical abstract Swiss cheese $\mathbf A=((a_n,r_n))_{n=0}^\infty$ such that ${a_0=a_1=a},$ ${r_0=\lambda_0}$ and ${r_1=\lambda_1}$ such that ${R(X_{\mathbf A})}$ is regular and ${\sum_{n=2}^\infty r_n<\varepsilon}$.
\end{proposition}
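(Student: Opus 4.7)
The plan is essentially to assemble the two cited results in the natural way. First, by the translation $z\mapsto z-a$ we may assume without loss of generality that $a=0$; the prescribed data then forces $(a_0,r_0)=(0,\lambda_0)$ and $(a_1,r_1)=(0,\lambda_1)$, and the classical conditions relating these two pairs (namely $\cb{0}{\lambda_1}\subseteq\ob{0}{\lambda_0}$, which holds since $\lambda_0>\lambda_1$) are automatic. The ambient ``working set'' for the remainder of the construction is the closed annulus $Y:=\cb{0}{\lambda_0}\setminus\ob{0}{\lambda_1}$, which degenerates to the closed disc $\cb{0}{\lambda_0}$ when $\lambda_1=0$.

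Next I would apply a McKissick-style construction, as packaged in \cite[Example~2.9]{feinheath2010}, to produce a sequence $((a_n,r_n))_{n=2}^\infty$ of centres in $\ob{0}{\lambda_0}$ and positive radii such that:
\begin{enumerate}
\item each $\cb{a_n}{r_n}$ is contained in $\ob{0}{\lambda_0}$ and is disjoint from $\cb{0}{\lambda_1}$ and from every other $\cb{a_m}{r_m}$ ($m\neq n$, $m\geq 1$);
\item $\sum_{n=2}^\infty r_n<\varepsilon$;
\item the resulting Swiss cheese set $X_{\mathbf A}$ has $R(X_{\mathbf A})$ regular.
\end{enumerate}
The construction is the standard inductive one: enumerate a countable dense subset $\{z_m\}$ of $Y$ and, at stage $m$, choose a finite family of small open discs clustered near $z_m$ whose removal, combined with the derivative estimates of Lemma \ref{R(X)_cheese_estimates}, supplies rational functions in $R_0(X_{\mathbf A})$ that peak at $z_m$ and are small off any given neighbourhood of $z_m$. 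The radii at stage $m$ are chosen in a geometric sequence with ratio small enough to guarantee that the total added radius across all stages is less than $\varepsilon$. Disjointness from $(a_0,r_0)$ and $(a_1,r_1)$ is preserved by simply shrinking the stage-$m$ radii further whenever $z_m$ is near $\partial\cb{0}{\lambda_0}$ or near $\cb{0}{\lambda_1}$, which is possible because $z_m\in Y$.

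The role of \cite[Lemma~8.5]{FMY2014a} is to supply the formal bookkeeping that allows us to prescribe the first two terms $(a_0,r_0)$ and $(a_1,r_1)$ of the classical abstract Swiss cheese and still obtain a classical sequence: this lemma takes any admissible finite list of ``initial'' pairs together with an $\varepsilon$-budget for the remaining radii and shows that the McKissick inductive step can be carried out consistently with that initial data. Plugging in our two initial pairs and invoking the existence part of \cite[Example~2.9]{feinheath2010} for the tail then yields $\mathbf A$ with all the required properties.

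The only delicate point in this plan is the simultaneous control of the three competing requirements: disjointness (to keep $\mathbf A$ classical), $\sum_{n\geq 2}r_n<\varepsilon$, and sufficient density of the removed discs to force regularity of $R(X_{\mathbf A})$. This is precisely the standard McKissick trade-off, and it is entirely handled inside the two cited results; no new idea is required here, which is why the proposition is stated as a ``combination'' of those two results rather than proved from scratch.
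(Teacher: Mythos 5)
The paper gives no proof of this proposition: it is dispatched with the single sentence that it ``is a combination of'' \cite[Lemma~8.5]{FMY2014a} and, ``for example,'' \cite[Example~2.9]{feinheath2010}. Your plan is therefore a reconstruction; it is plausible in outline, but it likely misassigns the roles of the two citations and chooses the harder of two available routes. The FMY reference is a paper on \emph{classicalisation} of abstract Swiss cheeses, so Lemma~8.5 there is surely a classicalisation-with-constraints result, not ``formal bookkeeping for the McKissick induction.'' The natural reading of the cited combination is: first produce a possibly non-classical abstract Swiss cheese $\mathbf A'$ with $(a'_0,r'_0)=(a,\lambda_0)$, $(a'_1,r'_1)=(a,\lambda_1)$, $\sum_{n\geq 2}r'_n<\varepsilon$, and $R(X_{\mathbf A'})$ regular (any of the McKissick/K\"orner/Feinstein--Heath constructions on the annulus does this, which is why Example~2.9 is cited only as one example); then apply Lemma~8.5 to classicalise $\mathbf A'$ while keeping the first two discs and the radius budget. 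Regularity is inherited automatically by the classicalisation, because $X_{\mathbf A}\subseteq X_{\mathbf A'}$ and $R_0(X_{\mathbf A'})|_{X_{\mathbf A}}\subseteq R_0(X_{\mathbf A})$, so separating functions restrict.

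Running the McKissick construction so as to produce a \emph{classical} sequence directly, as you propose, requires keeping each new batch of discs disjoint from every disc chosen at every earlier stage; this is exactly the headache the classicalisation machinery exists to avoid, and it is not supplied by the two cited results as you describe them. A further caution on detail: ``rational functions that peak at $z_m$ and are small off any neighbourhood of $z_m$'' is not the right mechanism for regularity. To get regularity of $R(X)$ one needs, for each $x\neq y$, an $f\in R(X)$ equal to $1$ at $x$ and vanishing on a \emph{whole neighbourhood} of $y$; McKissick/K\"orner achieve this by uniformly approximating indicator functions of discs by rational functions whose poles live in the removed discs, not by constructing peak functions.
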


Note that, since $R(X_{\mathbf A})$ is regular, we must have $\tint_{\C}X_{\mathbf A}=\emptyset$.

We now give the details of the construction.

\begin{lemma}\label{construction}
Let ${0<r<1}$ be given$,$ let $C_r$ denote the circle of radius $r$ centred at $0,$ and let ${\varepsilon>0}$. Then there exists a classical abstract Swiss cheese $\mathbf A=((a_n,r_n))$ such that
\begin{enumerate}
  \item $\rho(\mathbf A)<\varepsilon,$ $\tint_{\C}(X_{\mathbf A})=\emptyset,$ and $C_r\subseteq X_{\mathbf A},$
  \item there is a dense open subset $U$ of $X_{\mathbf A}$ such that $X_{\mathbf A}\setminus U$ has area zero and each point $z\in U$ is a point of continuity for $R(X_{\mathbf A}),$
  \item for each $f\in R(X),$ we have $f|C_r\in\od{\infty}{C_r}$ and $\sum_{k=1}^\infty\abs{f\diff k}_{C_r}^{-1/k}=\infty$.
\end{enumerate}
\end{lemma}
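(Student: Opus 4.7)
The plan is to realise $\mathbf{A}$ as a ``master'' Swiss cheese whose removed discs are the collected sub-discs of countably many small sub-Swiss cheeses of the type supplied by Proposition \ref{regular annulus}, placed inside disjoint closed discs arranged in concentric shells around $C_r$. Choose $R$ slightly larger than $1$ so that $C_r\subseteq\ob{0}{R}$, set $a_0=0$, $r_0=R$, and fix a decreasing sequence $\eta_n\downarrow 0$ (with $\eta_1<\min\{r,R-r\}$) to be specified. Define the bulk $W_0:=\{z\in\cb{0}{R}:\dist(z,C_r)\geq\eta_1\}$ and the shells $W_n:=\{z\in\cb{0}{R}:\eta_{n+1}\leq\dist(z,C_r)\leq\eta_n\}$ for $n\geq 1$. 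By a standard Vitali-type covering, there is inside each $W_n$ a countable pairwise disjoint collection of closed discs $\{\cb{b_m^{(n)}}{\rho_m^{(n)}}\}_m\subseteq\ob{0}{R}$ covering $W_n$ up to a set of area zero. In each such disc, Proposition \ref{regular annulus} (applied with $a=b_m^{(n)}$, $\lambda_0=\rho_m^{(n)}$, $\lambda_1=0$ and a tiny $\varepsilon_m^{(n)}>0$) supplies a classical sub-Swiss cheese $\mathbf{A}_m^{(n)}$ such that $R(X_{\mathbf{A}_m^{(n)}})$ is regular and the sub-discs of $\mathbf{A}_m^{(n)}$ have total radius less than $\varepsilon_m^{(n)}$. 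The master $\mathbf{A}$ is built by listing $(0,R)$ followed by all sub-discs of every $\mathbf{A}_m^{(n)}$, having arranged $\sum_{n,m}\varepsilon_m^{(n)}<\varepsilon$; disjointness of the $\cb{b_m^{(n)}}{\rho_m^{(n)}}$ then ensures $\mathbf{A}$ is classical.

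Properties (i) and (ii) follow directly. For (i): $\rho(\mathbf{A})<\varepsilon$ by design, $C_r\subseteq X_{\mathbf{A}}$ because each removed disc lies at distance $\geq\eta_{n+1}>0$ from $C_r$, and $\tint_{\C}(X_{\mathbf{A}})=\emptyset$ because each $X_{\mathbf{A}_m^{(n)}}$ has empty interior in $\C$ (by regularity) while the $\cb{b_m^{(n)}}{\rho_m^{(n)}}$ are dense in $\cb{0}{R}$. For (ii), take $U:=\bigcup_{n,m}\bigl(X_{\mathbf{A}_m^{(n)}}\cap\ob{b_m^{(n)}}{\rho_m^{(n)}}\bigr)$: this is open in $X_{\mathbf{A}}$, each of its points is a point of continuity for $R(X_{\mathbf{A}})$ by Lemma \ref{subset interior points of continuity}, and $X_{\mathbf{A}}\setminus U$ sits inside $C_r$, the boundary circles $\partial\cb{b_m^{(n)}}{\rho_m^{(n)}}$, and the area-zero residue from the Vitali coverings, so has area zero while $U$ is dense in $X_{\mathbf{A}}$.

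The hard part is (iii). Writing $T_p:=\sum_m\varepsilon_m^{(p)}$, Lemma \ref{R(X)_cheese_estimates} will give, for each $z\in C_r$ and $f\in R_0(X_{\mathbf{A}})$,
\[
\abs{f\diff k(z)}\leq k!\,\abs{f}_{X_{\mathbf{A}}}\Biggl(\frac{R}{(R-r)^{k+1}}+\sum_{p=1}^\infty\frac{T_p}{\eta_{p+1}^{k+1}}\Biggr),
\]
since $d_j\geq\eta_{p+1}$ for every sub-disc in $W_p$ and $d_0=R-r$ for $z\in C_r$. The critical calibration is to pick $T_p$ and $\eta_p$ so that the resulting constant $M_k$ bounding $\abs{f\diff k}_{C_r}/\abs{f}_{X_{\mathbf{A}}}$ satisfies $\sum_k M_k^{-1/k}=+\infty$ while $\sum_p T_p<\varepsilon$. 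The proposed choice is $T_p:=\varepsilon/2^{p+2}$ together with $\eta_n:=1/\log(n+n_0)$ for some large $n_0$; a Laplace-type estimate of $\sum_p 2^{-p}(\log p)^{k+1}$ (maximum near $p\sim k/(\log 2\cdot\log k)$) yields $M_k^{1/k}\leq Ck\log k$, whence $\sum_k M_k^{-1/k}\geq c\sum_k 1/(k\log k)=+\infty$, only just. For general $f\in R(X_{\mathbf{A}})$, approximate by $f_n\in R_0(X_{\mathbf{A}})$ with $f_n\to f$ uniformly; the derivative bound applied to each $f_n$ makes $(f_n\diff k|_{C_r})$ uniformly bounded and (via the bound for $f_n\diff{k+1}$) equicontinuous on $C_r$, so Arzel\`a--Ascoli combined with a diagonal subsequence argument produces uniform limits $g_k$ on $C_r$ that, through path integrals inherited from the $f_n$, are the successive complex derivatives of $f|_{C_r}$. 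Hence $f|_{C_r}\in\od\infty{C_r}$ with $\abs{f\diff k}_{C_r}\leq M_k\abs{f}_{X_{\mathbf{A}}}$, and since $\abs{f}_{X_{\mathbf{A}}}^{-1/k}\to 1$ the required divergence $\sum_k\abs{f\diff k}_{C_r}^{-1/k}=+\infty$ follows. The heart of the argument is the tight balance between $T_p$ decaying rapidly enough to keep $\rho(\mathbf{A})<\varepsilon$ and $\eta_p$ decaying no faster than logarithmically so that the derivative sum remains controlled.
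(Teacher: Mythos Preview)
Your approach is correct but takes a different geometric route from the paper's.

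The paper applies Proposition~\ref{regular annulus} directly to a sequence of concentric \emph{annuli} approaching $C_r$ dyadically from inside and outside (with inner/outer radii $r\pm 2^{2-n_0-k}$ and $r\pm 2^{3-n_0-k}$); there is no Vitali step. You instead lay down logarithmic shells $\eta_n=1/\log(n+n_0)$ around $C_r$, Vitali-cover each shell by closed discs, and then apply Proposition~\ref{regular annulus} (with $\lambda_1=0$) inside each Vitali disc. The annular version is simpler: the exceptional set $E$ is just a countable union of circles, so the area-zero and density claims in~(b) are immediate, whereas you must additionally track the Vitali residual sets and infinitely many boundary circles. (Your density claim for $U$ is correct but needs a short extra argument to see that every relatively open subset of $X_{\mathbf A}$ meets the open part of some Vitali disc in a point of $X_{\mathbf A}$; the paper also leaves the analogous step to the reader.)

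The calibrations are dual to one another. The paper fixes the dyadic distances $d_n\sim 2^{-n}$ first and then chooses the radii budget $\gamma_n$ so that $\gamma_n\leq d_n^{\,k+1}(\log(k+3))^k/2^k$ for \emph{every} $k$; this yields the clean bound $\sum_j r_j/d_j^{k+1}\leq d_0^{-(k+1)}+(\log(k+3))^k$ with no asymptotic analysis. You fix a geometric budget $T_p=\varepsilon\,2^{-p-2}$ and instead tune the shell widths, which forces the Laplace-type estimate of $\sum_p 2^{-p}(\log p)^{k+1}$ to recover $M_k^{1/k}\lesssim k\log k$. Both routes land on $\sum_k M_k^{-1/k}\geq c\sum_k 1/(k\log k)=\infty$; the paper's simply avoids that estimate. (Your displayed bound omits the $p=0$ contribution from the bulk $W_0$, but this is only $T_0/\eta_1^{k+1}=O(C^{k})$ and is harmless.)

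For the passage from $R_0(X)$ to $R(X)$ in~(c), the paper invokes uniform regularity of $C_r$ together with \cite[Theorem~5.6]{dalefein2010}; your Arzel\`a--Ascoli/diagonal-subsequence argument is a valid alternative.
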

\begin{proof}
Our abstract Swiss cheese $\mathbf A$ will be obtained by combining a certain pair of sequences $(\mathbf A_n)$, $(\mathbf B_n)$ of abstract Swiss cheeses in a suitable way. We first construct the sequences $(\mathbf A_n)$, $(\mathbf B_n)$.

Choose a positive integer $n_0$ large enough so that $r+2^{1-n_0}<1$ and $r-2^{1-n_0}>0$. As in \cite{FY2015}, choose a sequence $(\gamma_n)$ of positive real numbers such that, for each $n,k\in\N$, we have
\[
\gamma_n\leq \frac{(2^{1-n_0-n})^{k+1}(\log{(k+3)})^k}{2^k},
\]
and such that $\sum_{n=1}^\infty \gamma_n<\varepsilon$. Thus, for each $k\in\N$,
\[
\sum_{n=1}^\infty\frac{\gamma_n}{(2^{1-n_0-n})^{k+1}}\leq (\log{(k+3)})^k.
\]

Let ${\mathbf A_1=((a_n^{(1)},r_n^{(1)}))}$ be the classical abstract Swiss cheese obtained from Proposition~\ref{regular annulus} applied with ${\lambda_0=r-2^{1-n_0}}$, ${\lambda_1=0}$, ${a=0}$ and ${\varepsilon=\gamma_1/2}$. Let ${\mathbf B_1=((b_n^{(1)},r_n^{(1)}))}$ be the classical abstract Swiss cheese obtained from Proposition~\ref{regular annulus} applied with ${a=0}$, ${\lambda_0=1}$ and ${\lambda_1=r+2^{1-n_0}}$ and ${\varepsilon=\gamma_1/2}$.

For each $k\in\N$ with $n\geq 2$: let ${\mathbf A_k=((a_n^{(k)},r_n^{(k)}))}$ be the classical abstract Swiss cheese obtained from Proposition~\ref{regular annulus} applied with ${\lambda_0=r-2^{2-n_0-k}}$, ${\lambda_1=r-2^{3-n_0-k}}$, ${a=0}$ and ${\varepsilon=\gamma_k/2}$; let ${\mathbf B_k=((b_n^{(k)},r_n^{(k)}))}$ be the classical abstract Swiss cheese obtained from Proposition~\ref{regular annulus} applied with ${a=0}$, ${\lambda_0=r+2^{3-n_0-k}}$ and ${\lambda_1=r+2^{2-n_0-k}}$ and ${\varepsilon=\gamma_k/2}$.

Let $a_0=0$ and $r_0=1$ and let $((a_n,r_n))_{n=1}^\infty$ be an enumeration (without repeats) of the set
\[
\bigcup_{\substack{m,n\in\N\\ n\geq 2}} \{(a_n^{(m)},r_n^{(m)}),(b_n^{(m)},s_n^{(m)})\}.
\]
Then $\mathbf A=((a_n,r_n))_{n=0}^\infty$ is an abstract Swiss cheese with $C_r\subseteq X_{\mathbf A}$. It is not hard to see that $\mathbf A$ is classical and $\rho(\mathbf A)<\varepsilon$. It is clear that $\tint_{\C}(X_{\mathbf A})=\emptyset$. Also, for each $n\in\N$, let
\[
E_n:=\partial\ob{a_0\diff n}{r_0\diff n}\cup\partial \ob{a_1\diff n}{r_1\diff n}\cup\partial\ob{b_0\diff n}{s_0\diff n}\cup\partial \ob{b_1\diff n}{s_1\diff n},
\]
(where $\partial S$ denotes the boundary of $S\subseteq\C$) and set $E:=C_r\cup \bigcup_{n=1}^\infty E_n$. Set $X:=X_{\mathbf A}$ and $U:=X\setminus E$. It is easy to see that $E$ is a closed set and that $U$ is a dense open subset of $X$. Moreover, $E$ has area zero .

We {\em claim} that each point $x\in U$ is a point of continuity for $R(X)$. Let $x\in U$. Then there exists a unique $n\in\N$ such that either $x\in X_{\mathbf A_n}$ or $x\in X_{\mathbf B_n}$. If $x\in X_{\mathbf A_n}$, set $Y=X_{\mathbf A_n}$, and if $x\in X_{\mathbf B_n}$ then set $Y=X_{\mathbf B_n}$. By our construction $R(Y)$ is regular and it is not hard to see that $x\in\tint_X{Y}$. Thus, by Lemma \ref{subset interior points of continuity}, $x$ is a point of continuity for $R(X)$. This proves the claim.

It remains to show that (c) holds. We first consider functions in $R_0(X)$.
Let $z\in C_r$ and let $f\in R_0(X)$. For each $n\in\N$, let $d_n$ the distance from $\cb{a_n}{r_n}$ to $C_r$, and let $d_0=1-r$. Since each $\mathbf A_n$ and each $\mathbf B_n$ are classical, and since $C_r\nsubseteq X_{\mathbf A_n},X_{\mathbf B_n}$ for all $n\in\N$, it follows that $d_n>0$ for all $n\in\No$. By Lemma \ref{R(X)_cheese_estimates}, for each $k\in\No$, we have
\[
\abs{f\diff k(z)}\leq k!\sum_{j=0}^\infty\frac{r_j}{d_j^{k+1}}\abs{f}_{X}
\]
Fix $m\in\N$. Then there exists a unique $n\in\N$ such that there exists $\ell\in\N$ with $(a_m,r_m)=(a_\ell\diff n,r_\ell\diff n)$ or there exists $\ell\in\N$ with $(a_m,r_m)=(b_\ell\diff n,s_\ell\diff n)$. In either case, since $\mathbf A_n$ and $\mathbf B_n$ are classical, we have $d_m>2^{3-n_0-m}$. Thus
\[
k!\sum_{j=0}^\infty\frac{r_j}{d_j^{k+1}}\abs{f}_{X}\leq k!\abs{f}_{X}\left(\frac1{d_0^{k+1}}+\sum_{j=1}^\infty\frac{\gamma_j}{(2^{1-n_0-j})^{k+1}}\right).
\]
for each $k\in\No$. It follows that, for each $k\in\No$, we have
\[
\abs{f\diff k}_{C_r}\leq k!\abs{f}_{X}\left(\frac1{d_0^{k+1}}+(\log{(k+3)})^{k}\right).
\]
From this we deduce that, for each $f\in R(X)$ (not necessarily in $R_0(X)$), we have $f|C_r\in\od{\infty}{C_r}$ and the same estimates hold. (One way to see this is to note that $C_r$ is uniformly regular, and apply \cite[Theorem~5.6]{dalefein2010}.)

Now let $f\in R(X)$. As in \cite{FY2015}, choose $N\in\N$ large enough so that $(\log{(k+3)})^k\geq 1/d_0^{k+1}$ for all $k\in\N$ with $k\geq N$. Then we have
\[
\sum_{j=1}^\infty\frac1{\abs{f\diff j}_{C_r}^{1/j}}\geq \sum_{j=N}^{\infty}\frac1{(2\abs{f}_X)^{1/j}j\log{(j+3)}}=\infty,
\]
and so (c) holds. This completes the proof.
\end{proof}

We are now ready to prove Theorem \ref{trivial Jensen cheese main theorem}.

\begin{proof}[Proof of Theorem~\textup{\ref{trivial Jensen cheese main theorem}}]
Apply Lemma \ref{construction} with $r=1/2$ and $\varepsilon=1$ to obtain a classical abstract Swiss cheese $\mathbf A$ which satisfies properties (a)--(c) described in the statement of the lemma, and let $X:=X_{\mathbf A}$. Then, for each $f\in R(X)$, we have $f|C_r\in\od\infty X$ and
\[
\sum_{n=1}^\infty\abs{f\diff n}_{C_r}^{-1/n}=\infty.
\]
So, by Corollary \ref{Useful Den Carl}, if $f\in R(X)$ for which $f^{(k)}(z)=0$ for some $z\in C_r$ and all $k\in\No$ then $f$ is identically zero on $C_r$. It follows that no point of $C_r$ can be a point of continuity for $R(X)$ and therefore $R(X)$ is not regular. It remains to see that $R(X)$ has no non-trivial Jensen measures on $X$. By Lemma~\ref{construction}(b) there is a dense open subset $U$ of $X$ for which every point $z\in U$ is a point of continuity for $R(X)$ and such that $X\setminus U$ has area $0$. Set $E:=X\setminus U$. By Lemma~\ref{punctured neighbourhood R point}, every point of $U$ is also an $R$-point. Let $x\in C_r$ and let $\mu$ be a Jensen measure for $\varepsilon_x$. Then since $x\notin U$, by Lemma \ref{non R points in support}, $\supp\mu\cap U=\emptyset$ and so $\supp\mu\subseteq E$. Since $\tint_{\C}(X)=\emptyset$ and the area of $E$ is $0$, it follows from Corollary~\ref{jensen measures supports corollary} that $\mu$ must be trivial. This completes the proof.
\end{proof}

It is also possible to show that $R(X_{\mathbf A})$ admits no non-trivial Jensen measures by appealing to the theory of Jensen interior. (See, for example, \cite[p.~319]{gamelin1983}.) This is the approach used in \cite{feinstein2001}.

Our final corollary follows immediately from Theorem~\ref{trivial Jensen cheese main theorem}.

\begin{corollary}
There exists a locally connected compact plane set $X$ such that $R(X)$ is essential$,$ non-trivial and non-regular and yet $R(X)$ admits no non-trivial Jensen measures.
\end{corollary}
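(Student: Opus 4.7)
The plan is to simply extract the required compact plane set from Theorem~\ref{trivial Jensen cheese main theorem}. Apply that theorem to obtain a classical abstract Swiss cheese $\mathbf{A} = ((a_n, r_n))$ such that $R(X_{\mathbf{A}})$ is not regular and has no non-trivial Jensen measures, and set $X := X_{\mathbf{A}}$.

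Most properties are now immediate. Since $\mathbf{A}$ is classical, the discussion following the definition of classical Swiss cheese sets (in particular, the paragraph quoting \cite[Theorem~8.3]{dalefein2010} and \cite[Theorem~1.8]{feinheath2010}) tells us that $X$ is connected and locally connected, and that $R(X)$ is essential. The two properties of non-regularity and having no non-trivial Jensen measures are direct outputs of Theorem~\ref{trivial Jensen cheese main theorem}.

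The only remaining item is that $R(X)$ is non-trivial, in the sense that $R(X) \neq C(X)$. This is immediate: $C(X)$ is regular on any compact Hausdorff space, while we have just noted that $R(X)$ fails to be regular; hence they cannot coincide. (Alternatively, non-triviality follows directly from $R(X)$ admitting a non-trivial representing measure at some point, via the observation preceding Corollary~\ref{jensen measures supports corollary}; but the regularity argument is shorter.) There is no real obstacle here, since the work has all been done in Theorem~\ref{trivial Jensen cheese main theorem}; the corollary is only a repackaging.
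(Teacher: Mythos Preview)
Your proof is correct and follows the paper's approach exactly: the paper merely states that the corollary follows immediately from Theorem~\ref{trivial Jensen cheese main theorem}, and you have spelled out the details (classical Swiss cheese sets are locally connected with $R(X)$ essential, and non-regularity forces $R(X)\neq C(X)$). Your parenthetical alternative via a non-trivial representing measure is not quite justified as stated---the theorem only rules out non-trivial \emph{Jensen} measures, not all representing measures---but since you correctly rely on the regularity argument instead, this does not affect the validity of your proof.
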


We conclude with some open questions.

\begin{question}
Is the uniform algebra $R(X_{\mathbf A})$ constructed in Theorem~\ref{trivial Jensen cheese main theorem} necessarily antisymmetric? If not, can the construction be modified to yield an example which has the properties in that theorem and is also antisymmetric?
\end{question}

\begin{question}
Let $A$ be a uniform algebra on a compact Hausdorff space $X$, and let $M_i~(i\in I)$ be the decomposition of $X$ into maximal $A$-antisymmetric subsets.
\begin{enumerate}
\item
Suppose that $A|{M_i}$ is regular on $M_i$ for all $i \in I$. Must $A$ be regular on $X$? What if we assume the stronger condition that $A|M_i$ is regular (so natural and regular on $M_i$) for all $i \in I$?
\item
What is the answer to (a) in the special case where $X$ is a compact plane set and $A=R(X)$?
\end{enumerate}
\end{question}

\bibliographystyle{abbrvnat}

\end{document}